\definecolor{link-color}{rgb}{0.15,0.4,0.15}
\newtheorem{theorem}{Theorem}[section]
\newtheorem{corollary}[theorem]{Corollary}
\newtheorem{lemma}[theorem]{Lemma}
\theoremstyle{definition}
\newtheorem{remark}[theorem]{Remark}
\newcommand{\R}{\mathbb{R}}
\newcommand{\N}{\mathbb{N}}
\renewcommand{\P}{\mathbb{P}}
\newcommand{\E}{\mathbb{E}}
\newcommand{\1}{\mathbbm{1}}
\newcommand{\e}{\mathbbm{e}}
\newcommand{\n}{\mathbf{\eta}}
\DeclareMathOperator{\re}{\mathbb{R}}
\DeclareMathOperator{\ee}{\boldsymbol{e}}
\title{Conditioning subordinators  embedded in Markov processes}
\author{Andreas E. Kyprianou\footnote{Department of Mathematical Sciences, University of Bath, Claverton Down, Bath, BA2 7AY, UK. Email: \texttt{a.kyprianou@bath.ac.uk}}, \
Victor Rivero\footnote{CIMAT A. C.,
Calle Jalisco s/n,
Col. Valenciana,
A. P. 402, C.P. 36000,
Guanajuato, Gto.,
Mexico. Email: \texttt{rivero@cimat.mx}},
\ Bat\i{} \c{S}eng\"ul\footnote{Department of Mathematical Sciences, University of Bath, Claverton Down, Bath, BA2 7AY, UK. Email: \texttt{b.sengul@bath.ac.uk} $^{ a}$Corresponding author.} $^{,a}$
}
\begin{document}
  \maketitle

  \begin{abstract}
  The running infimum of a L\'evy process relative to its point of issue is know to have the same range that of the negative of a certain subordinator. Conditioning a L\'evy process issued from a strictly positive value to stay positive may therefore be seen as implicitly conditioning its descending ladder heigh subordinator to remain in a strip. Motivated by this observation, we consider the general problem of conditioning a subordinator to remain in a strip. Thereafter we consider more general contexts in which subordinators embedded in the path decompositions of Markov processes are conditioned to remain in a strip.   
%We show that this notion is a straightforward extension of the classical work of Chaumont \cite{C} and Chaumont and Doney \cite{CD}, who developed the theory of L\'evy processes conditioned to stay positive.  Moreover, we show that, under mild conditions, conditioning a subordinator to stay in a strip, say $[0,a]$, is equivalent to first choosing a point in $[0,a]$, uniformly according to the normalised potential measure, and then conditioning the subordinator to approach that point continuously, after which it is sent to a cemetery state. Having developed the necessary theory, we describe two applications in the setting of self-similar Markov processes and last passage times of Markov processes.
  \end{abstract}
  \section{Introduction}
  Let $\mathbb{D}$ denote the space of c\`adl\`ag functions $\omega:[0,\infty) \rightarrow \R \cup \{\Delta\}$ such that, defining $\zeta=\inf\{t \geq 0: \omega_t=\Delta\}$, we have $\omega(t)=\Delta$ for $t \geq \zeta$. We call $\Delta$ the cemetery state and think of $\omega$ as killed once it enters the cemetery state. The space $\mathbb D$ is equipped with the Skorokhod topology and for $t \geq 0$, we write $(\mathcal F_t:t \geq 0)$ for the natural filtration. The process $X=(X_t:t \geq 0)$ denotes the co-ordinate process on $\mathbb D$ and we let $(X,\P_x)$ denote the law of a non-constant L\'evy process started at $x \in \R$.

  In, what is by now considered, classical work, it was shown in \cite{C, CD} that, under mild assumptions, there exists a (super)harmonic function $h\geq 0$ such that, for $x>0$,
  \begin{equation}
    \left.\frac{{\rm d}\mathbb{P}^\uparrow_x}{{\rm d}\mathbb{P}_x}\right|_{\mathcal{F}_t} : = \frac{h(X_t)}{h(x)}\1_{\{t<\tau^-_0\}}, \qquad t\geq 0,
    \label{COMLP}
  \end{equation}
  characterises the law of a L\'evy process conditioned to stay non-negative, where  $\tau^-_0 = \inf\{t>0: X_t <0\}$. To be more precise, the resulting  (sub-)Markov   process, $(X, \mathbb{P}^\uparrow_x)$, $x>0$, also emerges through the limiting procedure,
  \[
    \mathbb{P}^\uparrow_x(A) : = \lim_{q\downarrow0}\mathbb{P}_x(A, t< \e_q \,|\,  \tau^-_0 > \e_q ), \qquad t\geq 0, A\in\mathcal{F}_t,
  \]
  where, for $q>0$, $\e_q : = q^{-1}\e$ such that $\e$  is an independent exponentially distributed random variable with unit mean. This result would normally be proved in the setting of diffusions using potential analysis. For the case of L\'evy processes the analogous theory was not readily available and so the work of \cite{C, CD} is important in that it shows how excursion theory can be used instead.

  In this paper, we are interested in exploring conditionings of subordinators, that is, L\'evy processes with non-decreasing paths. Moreover, we are also interested in similarities that occur when conditioning subordinators that are embedded in the path decomposition of other Markov processes. In this respect, it is natural to  understand how to condition a subordinator to remain below a given threshold. To see why, let us return to the setting of conditioning a L\'evy process to remain non-negative and explore the effect of the conditioning on the range of the process $\underline{X}_t : = \inf_{s\leq t}X_s$, $t\geq 0$.

  It is well understood that there exists a local time at 0 for the process $(X_t - \underline{X}_t : t \geq 0)$, which is Markovian; see for example Chapter VI of \cite{bertoin}. If we write this local time process by $(L_t : t\geq 0)$ and set $L^{-1}_t = \inf\{s>0 : L_s>t\}$, $t\geq 0$, then $H_t : = X_{L^{-1}_t}$, for $L^{-1}_t<\infty$ and $H_t : = -\infty$ otherwise, defines a killed stochastic process with cemetery state $\{-\infty\}$, known as the descending ladder height process, whose range $(-\infty,0]$ agrees with that of $(\underline{X}_t: t\geq 0)$. In particular, for $x>0$, the law of $H$ under $\mathbb{P}_x$ is such that $S_{t}:=x- H_t$, $t\geq 0$ is a (killed) subordinator issued from $x$. (In fact, the renewal function associated to this subordinator is precisely the function $h$ in \eqref{COMLP}.) Since, for each $t>0$, $L^{-1}_t$ is in fact a stopping time, one may  consider the conditioning  associated to $(X,\mathbb{P}_x^\uparrow)$, $x>0$,  when viewed through the stopping times $(L^{-1}_t : t\geq 0)$, to correspond to conditioning the subordinator $(S_t : t\geq 0)$, issued from $x$, to remain positive; or equivalently to conditioning $-H$ to remain in the interval $[0,x)$.

  With this  example of a conditioned subordinator in hand, we extract the problem into its natural general setting. In the next section, we show how conditioning a general subordinator to stay in a strip, say $[0,a]$ can be developed rigorously. Additionally we show that 
  this conditioning can be seen as the combined result of choosing a point in $a$ according to a distribution, which is built from the potential measure of the subordinator, and then further conditioning the subordinator to hit that point.  Moreover, in the setting of stable subordinators, appealing additionally to the theory of self-similarity, we can interpret the conditioning as the result of an Esscher change of measure in the context of the Lamperti transform. 
  % Thereafter, we give two examples of how the concept of conditioned subordinators can be integrated into other settings. In Section \ref{pssmp} we consider the relationship between such conditioned stable subordinators and the Esscher transform in the setting of positive self-similar Markov processes. 
  
 In the spirit of observing the relationship between conditioning a L\'evy processes  to stay positive and the  conditioning of a key underlying subordinator in its path decomposition,   we look at the case of  conditioning a Markov process to avoid the origin beyond a fixed time.   A key element of the associated path decomposition will be role of conditioning inverse local time at the origin to remain in the interval $[0,a)$, with $a>0$ fixed. Finally in Section \ref{victor} we use the ideas from the previous sections to condition a L\'evy process, issued from the origin, to reach an overall maximum in $[0,b)$ in the time interval $[0,a)$. This is tantamount to conditioning its ascending ladder height and ascending ladder time, which is a bivariate subordinator, to stay in the time-space box $[0,a)\times[0,b)$.

The key mathematical principle that connects all three sections, as well as connecting with the historical theory of L\'evy processes conditioned to stay positive, is that each of the conditionings we consider pertains to a generalisation of the conditioning of an embedded subordinator to stay in a strip. Accordingly, features of the resulting conditioned process can be described via transformations in the spirit of a Doob $h$-transform that are reminiscent of the Doob $h$-transform that uses the subordinator potential, which corresponds to conditioning a subordinator to stay in a strip.

  \section{Conditioning a subordinator to stay in an interval}

  In the previous section, we outlined the standard notation for a L\'evy process $X$. Henceforth we shall assume that the process $X$ is a subordinator. That is to say, it has non-decreasing paths. We shall often be concerned with the setting that it is issued from the origin, in which case we write $\mathbb{P}$ in place of $\mathbb{P}_0$.  The law of $X$ is determined by a characteristic pair $(\kappa,\nu),$ with $\kappa\geq 0,$ and $\nu$ a measure on $(0,\infty)$ such that $\int_{x\in(0,\infty)}(1\wedge x)\nu(dx)<\infty.$ These are related to the law of $X$ via the Laplace exponent
  $$
  -\frac{1}{t}\log \E\left(\exp\{-\lambda X_{t}\}\right)=:\phi(\lambda)=\kappa\lambda+\int^{\infty}_{0}(1-e^{-\lambda x})\nu(dx),\qquad \lambda \geq 0, t>0.
  $$
  As usual, we will denote by $\overline{\nu}(\cdot)$ the tail L\'evy measure of $X$
  $$
  \overline{\nu}(x):=\nu(x,\infty),\qquad x>0.
  $$

  For $q \geq 0$ define the $q$-potential function of $X$ by
  \[
    U^{(q)}(x):= \int_0^\infty {\rm e}^{-q t} \P(X_t \leq x)\, {\rm d} t \qquad x \in \R.
  \]
  It is not too hard to show that for all $q \geq 0$ and $x \in \R$, $U^{(q)}(x)<\infty$. Note also that by monotone convergence we have  that $U^{(q)} \rightarrow U:=U^{(0)}$ uniformly on compacts as $q \downarrow 0$. The function $U$ is also known as the renewal function of the subordinator $X$.
  The next lemma follows trivially from the fact that both $t \mapsto X_t$ and $x \mapsto U(x)$ are increasing.

  \begin{lemma}\label{lem:martingale}
    For each $a >0$, the process
    \begin{equation}
      U(a-X_t)\1_{\{X_t <a\}}\qquad  t \geq 0,
      \label{supermg}
    \end{equation}
    is a supermartingale.
  \end{lemma}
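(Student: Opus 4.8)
The plan is to verify the three defining properties of a supermartingale for the process $M_t := U(a-X_t)\1_{\{X_t<a\}}$, namely adaptedness, integrability, and the conditional inequality, and then to obtain the last of these as an immediate consequence of a stronger \emph{pathwise} monotonicity. First I would record that $M$ is adapted: for each fixed $t$ the map $\omega\mapsto X_t(\omega)$ is $\mathcal{F}_t$-measurable and $U$ is Borel measurable (being monotone), so $M_t$ is $\mathcal{F}_t$-measurable. Integrability is then immediate: on the event $\{X_t<a\}$ we have $0\le a-X_t\le a$ (using $X_t\ge 0$), so by monotonicity and non-negativity of $U$ we get $0\le M_t\le U(a)$, and since $U(a)<\infty$ by the remark preceding the lemma, $M$ is in fact uniformly bounded, hence integrable.

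The crux of the argument is to show that $t\mapsto M_t$ is non-increasing along (almost) every path. Fix $s\le t$. On the event $\{X_t<a\}$ we automatically have $\{X_s<a\}$, because $X$ has non-decreasing paths and thus $X_s\le X_t<a$; on this event $a-X_t\le a-X_s$, and the monotonicity of $U$ gives $U(a-X_t)\le U(a-X_s)$, that is $M_t\le M_s$. On the complementary event $\{X_t\ge a\}$ we have $M_t=0\le M_s$, using only that $U\ge 0$. Combining the two cases yields $M_t\le M_s$ pathwise.

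With pathwise monotonicity in hand the supermartingale inequality requires no further work: for $s\le t$, since $M_t\le M_s$ almost surely and $M_s$ is $\mathcal{F}_s$-measurable, monotonicity of conditional expectation gives $\E[M_t\mid\mathcal{F}_s]\le\E[M_s\mid\mathcal{F}_s]=M_s$. There is no genuine obstacle here, consistent with the remark in the text that the result follows trivially; the only point deserving a moment's care is the behaviour of the indicator as $X$ crosses the level $a$, which fails to disrupt monotonicity precisely because $U\ge 0$, so the process can only jump downward to zero.
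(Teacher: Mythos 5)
Your proof is correct and follows exactly the route the paper indicates: the paper dispenses with the lemma by noting it ``follows trivially from the fact that both $t \mapsto X_t$ and $x \mapsto U(x)$ are increasing,'' and your argument is precisely the fleshed-out version of that remark, deducing pathwise monotonicity of $U(a-X_t)\1_{\{X_t<a\}}$ (with the indicator handled via $U\ge 0$) and then passing to the supermartingale inequality by conditional expectation. Nothing is missing; the boundedness observation $0\le M_t\le U(a)<\infty$ settles integrability, which the paper leaves implicit.
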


  \subsection{Definition of the conditioned process}

  As a non-negative supermartingale, we may use \eqref{supermg} to develop a Doob $h$-transform. For the remainder of the section, we fix $a >0$. For $x \in [0,a]$, we define a new measure $\mathbb{P}^\downarrow_x$ on $\mathbb D$ as follows:
  \[
    \mathbb{P}^\downarrow_x(A; t < \zeta)=\E_x\left[ \frac{U(a-X_t)}{U(a-x)}\1_{\{X_t < a, \, A\}}\right], \qquad t\geq 0, A \in \mathcal F_t,
  \]
  which makes sense in view of Lemma \ref{lem:martingale} and the fact that $U(z)>0,$ for all $z>0$. We will often abbreviate this as
  \begin{equation}\label{eq:Pc_def}
    \frac{{\rm d}\mathbb{P}^\downarrow_x}{{\rm d}\P_x}\Big|_{\mathcal{F}_t} = \frac{U(a-X_t)}{U(a-x)}\1_{\{X_t < a\}}.
  \end{equation}
  Since \eqref{supermg} is a supermartingale, the process $(X,\mathbb{P}^\downarrow_x)$ is sub-Markovian. The main result below states that there is a sense in which we can think of $(X,\mathbb{P}^\downarrow_x)$ as the process $(X,\P_x)$ conditioned to remain below level $a$. Hereafter and unless otherwise stated, we will assume that the level $a>0$ is fixed.

  \begin{theorem}\label{thm:limiti_eq}
    For $q >0$ let $\e_q : = q^{-1}\e$, where $\e$ is an exponential random variable which is independent of $X$. Moreover,  let $\tau^+_a := \inf\{t \geq 0: X_t > a\}$. Then for any stopping time $T$ and any $A \in \mathcal F_T$,
    \[
      \mathbb{P}^\downarrow_x(A; T < \zeta)= \lim_{q \downarrow 0} \P_x(A; T< \e_q \,|\,\e_q < \tau^+_a).
    \]
  \end{theorem}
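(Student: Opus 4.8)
The plan is to compute the conditional probability on the right-hand side explicitly for each $q>0$ and then pass to the limit $q\downarrow0$; throughout fix $x\in[0,a)$, a stopping time $T$ and $A\in\mathcal F_T$. First I would establish the elementary identity, valid for every starting point $y\in[0,a]$,
$$
\P_y(\e_q<\tau^+_a)=q\,U^{(q)}(a-y).
$$
Since $X$ is non-decreasing, $\{s<\tau^+_a\}=\{X_s\le a\}$, so conditioning on the value of the independent exponential clock and using spatial homogeneity gives
$$
\P_y(\e_q<\tau^+_a)=\int_0^\infty q\,{\rm e}^{-qs}\,\P_y(X_s\le a)\,{\rm d}s=\int_0^\infty q\,{\rm e}^{-qs}\,\P(X_s\le a-y)\,{\rm d}s=q\,U^{(q)}(a-y).
$$
Taking $y=x$ yields the denominator $\P_x(\e_q<\tau^+_a)=q\,U^{(q)}(a-x)$.

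For the numerator I would use the strong Markov property at $T$ together with the lack of memory of $\e_q$. On $\{T<\tau^+_a\}=\{X_T\le a\}$ the post-$T$ process is a subordinator issued from $X_T$, and, conditionally on $\{\e_q>T\}$, the residual clock $\e'_q:=\e_q-T$ is again exponential with parameter $q$ and independent of $\mathcal F_T$ and of the future path. Hence $\{T<\e_q<\tau^+_a\}$ factorises, and applying the identity above to the shifted process gives
$$
\P_x(A;\,T<\e_q<\tau^+_a)=\E_x\!\left[\1_A\,\1_{\{X_T\le a\}}\,{\rm e}^{-qT}\,\P_{X_T}(\e'_q<\tau^+_a)\right]=\E_x\!\left[\1_A\,\1_{\{X_T\le a\}}\,{\rm e}^{-qT}\,q\,U^{(q)}(a-X_T)\right].
$$

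Dividing by the denominator, the factors of $q$ cancel, so
$$
\P_x(A;\,T<\e_q\mid\e_q<\tau^+_a)=\frac{\E_x\!\left[\1_A\,\1_{\{X_T\le a\}}\,{\rm e}^{-qT}\,U^{(q)}(a-X_T)\right]}{U^{(q)}(a-x)},
$$
and it remains to let $q\downarrow0$. As $q\downarrow0$ both ${\rm e}^{-qT}\uparrow1$ and $U^{(q)}\uparrow U$, while the argument $a-X_T$ lives in the compact interval $[0,a]$, on which $U^{(q)}\to U$ uniformly; thus the integrand increases monotonically to $\1_A\1_{\{X_T\le a\}}U(a-X_T)$, monotone convergence applies to the numerator, and the denominator tends to $U(a-x)\in(0,\infty)$. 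The limit is therefore $\E_x[\1_A\1_{\{X_T\le a\}}U(a-X_T)]/U(a-x)$, which, after replacing $\1_{\{X_T\le a\}}$ by $\1_{\{X_T<a\}}$, matches $\mathbb{P}^\downarrow_x(A;\,T<\zeta)$ via the defining density \eqref{eq:Pc_def} read at the stopping time $T$.

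I expect two points to require the most care. The passage to the limit is the main analytic step, but it is clean here precisely because $U^{(q)}$ is monotone in $q$, so monotone convergence removes any need for a dominating function despite $T$ being possibly unbounded. The more delicate bookkeeping concerns the boundary $\{X_T=a\}$: the conditioning only forbids strictly exceeding $a$, whereas the $h$-transform density carries $\1_{\{X_T<a\}}$; one must check that $\{X_T=a\}$ contributes nothing under the expectation (e.g.\ because $U(0)=0$ away from the driftless finite-activity case, or because the subordinator does not occupy the level $a$ with positive probability), so that the two indicators agree. Ancillary to this is the extension of the identity \eqref{eq:Pc_def} from fixed times to the stopping time $T$, which follows from the usual optional-stopping argument for the supermartingale \eqref{supermg}.
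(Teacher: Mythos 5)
Your proposal is correct and follows essentially the same route as the paper's own proof: both write the conditional probability as a ratio, use the Markov property at $T$ together with the lack of memory of $\e_q$ to reduce the numerator to the identity $\P_y(\e_q<\tau^+_a)=q\,U^{(q)}(a-y)$, and then let $q\downarrow 0$; the only differences are immaterial (you integrate out the clock to get the factor ${\rm e}^{-qT}$ and pass to the limit by monotone convergence, whereas the paper keeps $\1_{\{T<\e_q\}}$ inside and invokes uniform convergence of $U^{(q)}$ to $U$ on compacts). Your closing caveat about the event $\{X_T=a\}$ is well taken and is in fact glossed over by the paper, whose proof ends with the indicator $\1_{\{X_T\leq a\}}$ rather than the strict one in \eqref{eq:Pc_def}; the two agree precisely when $U(0)\,\P_x(A,\,X_T=a)=0$, which holds except in the driftless finite-activity (compound Poisson) case.
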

  \begin{proof}
    Let $T$ be a stopping time and fix $A \in \mathcal F_T$. Then
    \begin{align} \label{eq:condition_exp}
      \P_x(A ; T < \e_q \,|\,\e_q \leq \tau_a^+)& = \frac{\P_x(A;T<\e_q; X_{\e_q} \leq a)}{\P_x(X_{\e_q}\leq a)} \\ \nonumber
      &= \frac{\P_x(A ; T< \e_q; X_{\e_q} -X_{T} \leq a-X_T)}{\P_x(X_{e_q} \leq a)}\\ \nonumber
      &= \E_x\left[\1_{\{A, \, X_{T}\leq a,\, T<\e_q\}}\frac{\P_{0}(X'_{\e_q-T} \leq a-X_T\,|\,\{T<\e_q\}\cap\mathcal{F}_T)}{\P_{x}(X_{\e_q} \leq a)}\right] \\ \nonumber
      &=\E_x\left[\1_{\{A,\, X_{T}\leq a,\, T<\e_q\}}\frac{\P_{0}(X'_{\e^{\prime}_q} \leq a-X_T)}{\P_x(X_{\e_q} \leq a)}\right] %\P(X'_{\e_q} \leq a-X_T\,|\,\mathcal{F}_T)
    \end{align}
    where $X'$ is an independent copy of $X$ and $\e^{\prime}_q$ is a copy of $\e_q$ independent of $X^{\prime}$. In the first equality we have used the fact that $X$ is increasing, in the third equality we use the stationary independent increments and in the final equality we have used the lack of memory property of the exponential distribution. %memorylessness

    Now we have that, for each $y\geq x$
    \[
      \P_x(X_{\e_q} \leq y) = \int_0^\infty q {\rm e}^{-qs}\P(X_s \leq y-x)\, ds = qU^{(q)}(y-x).
    \]
    Using the above and the fact that $U^{(q)}\rightarrow U$ uniformly on compacts as $q \downarrow 0$ we get  from \eqref{eq:condition_exp}  that
    \[
      \lim_{q \downarrow 0}\P_x(A; T < \e_q \,\,|\, \e_q \leq \tau_a^+)=\E_x\left[\frac{U(a-X_T)}{U(a-x)}\1_{\{A ,\,  X_{T\leq a,}\,T<\infty\}}\right].
    \]
     %The process $(X,\P)$ is subordinator, hence $X_\infty = \infty$ under $\P_x$.
    It follows that %$U(1-X_\infty)=0$ $\P_x$-a.s. and thus
    \[
      \lim_{q \downarrow 0}\P_x(A; T < \e_q \,|\, \e_q \leq \tau_a^+)=\E_x\left[\frac{U(a-X_T)}{U(a-x)}\1_{\{X_T \leq  a,  T<\infty\} \cap A}\right]=\mathbb{P}^\downarrow_x(A; T<\zeta)
    \]
    as required.
  \end{proof}
  \subsection{Path decomposition of the conditioned subordinator}

  Let us momentarily refer back to the motivation for the conditioning in the Theorem \ref{thm:limiti_eq} that comes from  the setting of the descending ladder height process of a L\'evy process conditioned to stay positive.

  The so-called Williams path decomposition, see e.g. \cite{C}, states that the conditioned L\'evy process reaches a global minimum, whose law can be characterised by the renewal function of the descending ladder height  subordinator. Moreover, given the space-time point of the global minimum, the evolution of the path of the conditioned L\'evy process thereafter is equal in law to an independent copy of the conditioned process issued from the origin, but glued on to the aforesaid space-time point.

  For example, in the special case that $\mathbb{P}^\uparrow_x$ corresponds to  a Brownian motion conditioned to stay positive, the original setting where D. Williams observed this path decomposition,  $x-H_t$, $t\geq 0$, is nothing more than a unit drift. The global minimum is achieved once the Brownian motion, and hence the process $x-H_t$, $t\geq 0$, hits a uniformly chosen point in $[0,x]$. Thereafter, it behaves like a Bessel-3 process issued from $0$, which happens to correspond to the weak limit on Skorokhod space $\lim_{x\downarrow0}\mathbb{P}^\uparrow_x$, i.e. the law of Brownian motion conditioned to stay non-negative when issued from the origin.

  If we strip away the Brownian motion in the above description and focus only on its descending ladder process, we are left with the conditioning of a very simple subordinator, i.e. a pure linear drift, conditioned to stay in the interval $[0,x]$. Moreover, this is done by uniformly choosing a point in $[0,x]$ and killing the subordinator once it is absorbed it reaches this state.

  One sees the same phenomena
  for the case of conditioning a Poisson process  to stay in an interval. Let $N=(N_t:t \geq 0)$ be a rate $1$ Poisson process. Then it is not hard to show that $U(x)=\lfloor x\rfloor +1 $ for every $x \geq 0$. Thus using \eqref{eq:Pc_def}, for each $a\in\{2,3,\dots\}$, $x \in \{0,\dots, a\}$ and $n \in \{0,\dots,a\}$
  \[
    \P^\downarrow_x(N_t = n) = \E_x\left[\frac{a+1-N_t}{a+1-x}\1_{\{N_t = n\}}\right]= \P_x(N_t = n) \left(1-\frac{n}{a+1-x}\right).
  \]
  We see that we can describe the law of $N$ under $\P^\downarrow_0$ as follows. Let $u\in\{0,\dots,a\}$ be chosen uniformly at random. Then under $\P^\downarrow_0$, $N$ is a rate $1$ Poisson process killed when it first hits level $u$.

  In greater generally, when $X$ is an arbitrary subordinator, $(X,\mathbb{P}^\downarrow_x)$ is an increasing killed Markov process,  and we should expect to see a `terminal value', $X_{\zeta-}$. In the case of the previous two examples, the law of this terminal value is uniformly distributed. In greater generality, again guided by the Williams path decomposition for a general L\'evy process in \cite{C}, one would expect the terminal value $X_{\zeta-}$ to be $U$-uniformly distributed. We can  ask for the law of $(X,\mathbb{P}^\downarrow_x)$ conditionally on the value of this maximum. Given the examples above, one would expect that under $\mathbb{P}^\downarrow_x|\{ X_{\zeta -}= y\}$, for $y\in [0,a)$, when $X$ has infinite jump activity, it is conditioned to approach $y$ continuously.

  %
  %  \subsubsection{Poisson process}
  %
  %  Let $N=(N_t:t \geq 0)$ be a rate $1$ Poisson process. Then it is not hard to show that $U(x)=\lfloor x\rfloor +1 $ for every $x \geq 0$. Thus using \eqref{eq:Pc_def} we see that for each $a\in\{2,3,\dots\}$ and $m \in \{0,\dots,a\}$
  %  \[
  %    \mathbb{P}^\downarrow_x(N_t = n) = \E\left[\frac{a+1-N_t}{a+1}\1_{\{N_t = n\}}\right]= \P_x(N_t = n) \left(1-\frac{n}{a+1}\right).
  %  \]
  %  We see that we can describe the law of $N$ under $\mathbb{P}^\downarrow_x$ as follows. Let $u\in\{0,\dots,a\}$ be chosen uniformly at random. Then under $\mathbb{P}^\downarrow_x$, $N$ is a rate $1$ Poisson process killed when it first hits level $u$.
  %
  %
  Our objective in this section is thus to describe the path decomposition of the process $(X,\mathbb{P}^\downarrow_x)$ in this spirit. We begin by finding the law of its terminal value.

  \begin{lemma}\label{lem:maximum}
    For $a >0$, we have the identity
    \[
      \mathbb{P}^\downarrow_x\left( X_{\zeta-} \leq y\right) = \frac{U(y-x)}{U(a-x)},\qquad  x \leq y\leq a .
    \]
  \end{lemma}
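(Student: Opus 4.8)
The plan is to compute the complementary probability $\mathbb{P}^\downarrow_x(X_{\zeta-} > y)$ and to exploit the monotonicity of the paths. Since $X$ is non-decreasing on $[0,\zeta)$ under $\mathbb{P}^\downarrow_x$, its terminal value is $X_{\zeta-} = \sup_{t < \zeta} X_t$, so that, writing $\tau_y^+ := \inf\{t \ge 0 : X_t > y\}$, one has the pathwise identity $\{X_{\zeta-} > y\} = \{\tau_y^+ < \zeta\}$. First I would evaluate this probability by applying the optional-stopping form of the $h$-transform established inside the proof of Theorem \ref{thm:limiti_eq}, namely $\mathbb{P}^\downarrow_x(A; T < \zeta) = \E_x[U(a-X_T)U(a-x)^{-1}\1_{\{A,\, X_T \le a,\, T < \infty\}}]$ for a stopping time $T$ and $A \in \mathcal F_T$. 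Taking $T = \tau_y^+$ and $A$ the whole space (and noting $\tau_y^+ < \infty$ a.s., since a non-constant subordinator drifts to $+\infty$) gives
\[
\mathbb{P}^\downarrow_x(X_{\zeta-} > y) = \mathbb{P}^\downarrow_x(\tau_y^+ < \zeta) = \frac{1}{U(a-x)}\E_x\big[U(a - X_{\tau_y^+})\1_{\{X_{\tau_y^+} \le a\}}\big].
\]

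The crux is then the renewal identity $\E_x[U(a - X_{\tau_y^+})\1_{\{X_{\tau_y^+} \le a\}}] = U(a-x) - U(y-x)$, which I expect to be the substantive step. I would obtain it from the occupation-time representation $U(a-x) = \E_x[\int_0^\infty \1_{\{X_t \le a\}}\,dt]$, a direct consequence of the definition of $U$ together with $\P_x(X_t \le a) = \P(X_t \le a - x)$. Splitting the time integral at $\tau_y^+$, the pre-$\tau_y^+$ part contributes $\E_x[\tau_y^+] = U(y-x)$, because $X_t \le y \le a$ for every $t < \tau_y^+$ and, by the same occupation-time identity, $\E_x[\tau_y^+] = \E_x[\int_0^\infty \1_{\{X_t \le y\}}\,dt] = U(y-x)$. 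For the post-$\tau_y^+$ part I would invoke the strong Markov property at $\tau_y^+$: conditionally on $\mathcal F_{\tau_y^+}$ the shifted process is a subordinator issued from $X_{\tau_y^+}$, whose expected occupation time below $a$ is exactly $U(a - X_{\tau_y^+})$, and is zero on $\{X_{\tau_y^+} > a\}$ since $U$ vanishes on the negative half-line. Equating the two evaluations of $U(a-x)$ yields the identity, and substituting back gives $\mathbb{P}^\downarrow_x(X_{\zeta-} > y) = (U(a-x) - U(y-x))/U(a-x)$, whence the claim follows by taking complements.

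The point requiring care is the pathwise identity $\{X_{\zeta-} > y\} = \{\tau_y^+ < \zeta\}$ together with the null-set subtleties introduced by creeping: when the drift $\kappa$ is positive the subordinator can reach $y$ (or $a$) continuously, so I would record that $\{X_t \le y\} = \{t < \tau_y^+\}$ holds only up to a Lebesgue-null set of times $t$, which is all the occupation-time arguments require, and that the single instant $\{X_{\zeta-} = y\}$ is correctly allocated to $\{\tau_y^+ \ge \zeta\}$. The remaining ingredients — measurability and a.s.\ finiteness of $\tau_y^+$, and the strong Markov property at $\tau_y^+$ under $\P_x$ — are standard, so the renewal identity rather than any of these should be the heart of the argument.
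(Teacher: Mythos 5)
Your proof is correct, but it takes a genuinely different route from the paper's. Both arguments reduce the lemma, via monotonicity of paths, to computing $\mathbb{P}^\downarrow_x(\tau^+_y<\zeta)$. The paper then invokes Theorem \ref{thm:limiti_eq} in its \emph{limiting} form, writing $\mathbb{P}^\downarrow_x(\tau^+_y<\zeta)=\lim_{q\downarrow0}\P_x(\tau^+_y<\e_q\,|\,\e_q<\tau^+_a)$ and evaluating the limit with the resolvent identity $\P_x(X_{\e_q}\leq y)=qU^{(q)}(y-x)$ together with the locally uniform convergence $U^{(q)}\to U$; in effect it re-runs, in two lines, the computation already carried out inside the proof of Theorem \ref{thm:limiti_eq}. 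You instead invoke that theorem through its stopping-time $h$-transform form, $\mathbb{P}^\downarrow_x(A;T<\zeta)=\E_x\left[\frac{U(a-X_T)}{U(a-x)}\1_{\{A,\,X_T\leq a,\,T<\infty\}}\right]$, applied at $T=\tau^+_y$, and then place the real work in the renewal identity $\E_x\left[U(a-X_{\tau^+_y})\1_{\{X_{\tau^+_y}\leq a\}}\right]=U(a-x)-U(y-x)$, which you prove by splitting the occupation-time representation $U(a-x)=\E_x\left[\int_0^\infty\1_{\{X_t\leq a\}}\,{\rm d}t\right]$ at $\tau^+_y$ and applying the strong Markov property. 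That identity appears nowhere in the paper, and your derivation of it is sound, including the points you flag: the a.s.\ finiteness of $\tau^+_y$, the strong Markov step, and the null-set bookkeeping around creeping (the sets $\{t:X_t\leq y\}$ and $[0,\tau^+_y)$ differ by at most one point, which is all the occupation-time computation needs). What the paper's route buys is brevity, since everything it needs has literally just been established; what your route buys is a limit-free, self-contained potential-theoretic argument that isolates a reusable first-passage identity (closely related to the classical overshoot formulas in Chapter III of \cite{bertoin}) and gives a direct probabilistic explanation of why the stopped supermartingale of Lemma \ref{lem:martingale} produces the law $U(\cdot-x)/U(a-x)$ for the terminal value.
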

  \begin{proof}
    Fix $x \in [0,a]$ and $y \in [x,a]$. Recall that $\tau^+_y:= \inf\{t \geq 0: X_t > y\}$. Then from Theorem \ref{thm:limiti_eq} we have that
    \[
      \mathbb{P}^\downarrow_x(\tau^+_y < \zeta)=\lim_{q \downarrow 0} \P_x(\tau^+_y < \e_q\,|\, \e_q < \tau^+_a)=1-\lim_{q \downarrow 0} \frac{\P_x(X_{\e_q} \leq y)}{\P_x(X_{\e_q} \leq a)} = 1-\frac{U(y-x)}{U(a-x)}.
    \]
    The lemma now follows since $\mathbb{P}^\downarrow_x\left(X_{\zeta-} \leq y\right) = \mathbb{P}^\downarrow_x(\tau^+_y>\zeta)$.
  \end{proof}

  Now we describe the law of $(X,\mathbb{P}^\downarrow_x)$ conditionally on $X_{\zeta-}$. In order to do so we make the following assumption:
  \[
    \tag{DA}U({\rm d}x) \text{ has a continuous density } u(x) \text{ with respect to the Lebesgue measure on } (0,\infty). \label{ass1}
  \]
  Using Proposition 12 in Chapter I in \cite{bertoin} we get that there exists a version $\tilde u$ of the potential density $u$ such that the function $x \mapsto \tilde u(a-x)$ is excessive for $X$. Next we show that the continuity assumption ensures that $x \mapsto u(a-x)$ is also  excessive for $X$.
  %When it exists, we denote this density by $u(x)$, $x\geq 0$.

  \begin{lemma}\label{mgproperties}
    Assume (DA). The process $(u(a-X_t)\1_{\{X_t <a\}}:t \geq 0)$ is a $\P$-supermartingale. %Moreover, for all $0<b<a$,
%    \[
%      u(a) = \E[u(a- X_{\tau_{(b,a)}})\1_{\{\tau_{(b,a)}<\infty\}}]
%    \]
%    where $\tau_{(b,a)} = \inf\{t> 0 : X_t \in(b,a)\}$.
  \end{lemma}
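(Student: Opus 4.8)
The plan is to show that the continuous density $u$ furnished by \eqref{ass1} coincides \emph{everywhere} on $(-\infty,a)$ with the excessive version $\tilde u$ obtained from Bertoin's Proposition~12, since then $u(a-\cdot)$ inherits excessivity and the asserted supermartingale property is immediate: for any nonnegative excessive function $f$ one has $\E_x[f(X_t)\mid\mathcal F_s]=P_{t-s}f(X_s)\le f(X_s)$, where $(P_t)$ denotes the semigroup of $X$. Since we already know $u=\tilde u$ Lebesgue-a.e. and that $x\mapsto\tilde u(a-x)$ is excessive, everything reduces to upgrading this almost-everywhere identity to an everywhere identity, using only the continuity of $u$.

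The first ingredient is that \eqref{ass1} forces the whole family of $q$-potentials to be absolutely continuous. Writing the resolvent equation for the potential measures of the subordinator, $U=U^{(q)}+q\,U*U^{(q)}$, and using $U(\mathrm dx)=u(x)\,\mathrm dx$, I would read off that $U^{(q)}$ has a density $u^{(q)}$ for every $q>0$, because $U*U^{(q)}$ automatically has a density when $U$ does. Consequently the $q$-resolvent kernel is absolutely continuous, $V_q(x,\mathrm dy)=u^{(q)}(y-x)\,\mathrm dy$ on $[x,\infty)$, so $V_q$ cannot see Lebesgue-null modifications of its integrand. In particular $qV_q[u(a-\cdot)](x)=qV_q[\tilde u(a-\cdot)](x)$ for every $x$, and by excessivity of $\tilde u(a-\cdot)$ this common value increases to $\tilde u(a-x)$ as $q\to\infty$.

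It then remains to identify this limit with $u(a-x)$, which I would do by a two-sided squeeze. For the lower bound, substitute $s=qt$ to write $qV_q[u(a-\cdot)](x)=\int_0^\infty \mathrm e^{-s}\,\E_x[u(a-X_{s/q})\1_{\{X_{s/q}<a\}}]\,\mathrm ds$; since $X$ is right-continuous with $X_0=x$ and $u$ is continuous at $a-x$ for $x<a$, two applications of Fatou's lemma (the integrand being nonnegative) give $\liminf_{q\to\infty}qV_q[u(a-\cdot)](x)\ge u(a-x)$, whence $\tilde u(a-x)\ge u(a-x)$. For the matching upper bound I would use that $g:=u(a-\cdot)\1_{\{\cdot<a\}}$ is itself lower semicontinuous (continuity of $u$ handles $\{z<a\}$, and at $z=a$ one checks $\liminf_{z\to a}g(z)=0=g(a)$), that the convolution operators $P_t$ and hence $V_q$ map nonnegative lower semicontinuous functions to lower semicontinuous functions, and that an increasing limit of such functions is lower semicontinuous; thus $\tilde u(a-\cdot)=\sup_q qV_q g$ is lower semicontinuous. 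Approaching $x$ through the full-measure set on which $u=\tilde u$ and invoking continuity of $u$ then yields $\tilde u(a-x)\le u(a-x)$. Combining the two bounds gives $u(a-x)=\tilde u(a-x)$ for all $x<a$, so $u(a-\cdot)$ is excessive and the process in the statement is a supermartingale.

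The main obstacle is precisely this passage from ``a.e.'' to ``everywhere''. One cannot simply transport the supermedian inequality from $\tilde u$ to $u$ through $P_t$, because the one-dimensional distributions of a subordinator (for instance one with a drift or a compound-Poisson component) may carry atoms and hence charge the null set $\{u\neq\tilde u\}$. This is exactly why the argument must be routed through the \emph{resolvent} $V_q$, whose absolute continuity under \eqref{ass1} is what renders it blind to null modifications, and why the absolute-continuity consequence of \eqref{ass1} is doing the essential work. A secondary point to keep in view is the possible unboundedness of $u$ near $0$ and the behaviour at the boundary $z=a$; this is harmless for fixed $x<a$ but must be kept out of the Fatou step, which is why I rely on nonnegativity rather than on a dominated-convergence argument.
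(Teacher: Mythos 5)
Your proof is correct, but it takes a genuinely different route from the paper's. The paper never actually uses the excessive version $\tilde u$ it quotes from Bertoin: its proof is a direct computation which, via Fubini's theorem, a change of variables and the Markov property, shows that $\int_{(0,\infty)}{\rm d}y\, f(y)\E[u(y-X_t)\1_{\{X_t<y\}}]\le \int_{(0,\infty)}{\rm d}y\, f(y)u(y)$ for every positive measurable $f$, hence that $\E[u(y-X_t)\1_{\{X_t<y\}}]\le u(y)$ for almost every $y>0$, and then upgrades this to every $y$ by taking levels $y_\varepsilon \uparrow y$ along which the inequality holds and using the continuity of $u$. You instead prove the stronger statement that $u(a-x)=\tilde u(a-x)$ for every $x<a$, by a potential-theoretic regularization: (DA) forces every $U^{(q)}$ to be absolutely continuous (your resolvent-equation argument works, though the domination $U^{(q)}\le U$ as measures gives it in one line), so $qV_q$ cannot distinguish $u(a-\cdot)$ from $\tilde u(a-\cdot)$; excessivity identifies $\lim_{q\to\infty}qV_q\bigl[u(a-\cdot)\1_{\{\cdot<a\}}\bigr]$ with $\tilde u(a-\cdot)$; Fatou plus right-continuity of the paths at time zero gives $\tilde u(a-\cdot)\ge u(a-\cdot)$ on $(-\infty,a)$; and lower semicontinuity of the increasing limit, together with the density of $\{u=\tilde u\}$ and the continuity of $u$, gives the reverse inequality. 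The two arguments share the same crux---promoting an almost-everywhere inequality to an everywhere one using the continuity hypothesis---and your diagnosis that the one-dimensional laws of $X$ may charge Lebesgue-null sets (so excessivity cannot be transported from $\tilde u$ to $u$ directly through the semigroup) is exactly why some such device is unavoidable in either approach. What your route buys is a sharper conclusion and a conceptual explanation of the role of (DA): under it, the continuous density is itself the excessive density. What the paper's route buys is economy: it is elementary and self-contained, needing only Fubini and the Markov property, with no appeal to resolvent absolute continuity, the resolvent characterization of excessive functions, or semicontinuous regularization.
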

  \begin{proof}
    Suppose that $f : \R \rightarrow \R$ is a positive and bounded measurable function. Then we have the following equalities for any $t \geq 0$,
    \begin{align*}
    \int_{(0,\infty)} {\rm d}y f(y) \E[u(y-X_t)\1_{\{X_t<y\}}] &= \E \left[\int_{(0,\infty)} {\rm d}y f(y) u(y-X_t)\1_{\{X_t<y\}} \right]\\
    &= \E \left[\int_{(0,\infty)} {\rm d}y f(y+X_t) u(y) \right]\\
    &=\int_{(0,\infty)} {\rm d}y \E_y[f(X_t)] u(y)\\
    &= \int_0^\infty {\rm d}s \int_{(0,\infty)} \P(X_s \in {\rm d}y) \E_y[f(X_t)]\\
    &= \int_0^\infty {\rm d}s \E[f(X_{t+s})\1_{\{X_s>0\}}];
    \end{align*}
    where in the second equality we have used the substitution $y'=y-X_t,$ then in fourth and fifth we have applied the definition of the potential measure and the Markov property at time $t,$ respectively.  Furthermore, since $f$ is positive and $X$ is non-decreasing we infer that the right most term in the above identity is bounded by above as follows
    \begin{align*}
    \int_0^\infty {\rm d}s \E[f(X_{t+s})\1_{\{X_s>0\}}]   & \leq \int_0^\infty {\rm d}s \E[f(X_{t+s})\1_{\{X_{t+s}>0\}}]\\
   &= \int_t^\infty {\rm d}s \E[f(X_{s})\1_{\{X_{s}>0\}}]\\
    &\leq  \int_{(0,\infty)}{\rm d}y u(y) f(y).
    \end{align*}
 Since this holds for any $f$ positive and measurable it follows that for every $t \geq 0$
    \[
    \E[u(y-X_t)\1_{\{X_t<y\}}] \leq u(y) \qquad \text{ for almost every } y >0.
    \]
  Let us prove that the above holds for all $y>0.$ Take $y >0$ and let $\varepsilon>0$ be small. Then it follows that there exists a point $y_\varepsilon \in [y-\varepsilon,y]$ such that
    \begin{equation}\label{eq:almost_every}
      \E[u(y_\varepsilon-X_t)\1_{\{X_t<y_\varepsilon\}}] \leq u(y_\varepsilon).
    \end{equation}
    Letting $\varepsilon \downarrow 0$ we see that the right hand side of \eqref{eq:almost_every} converges to $u(y)$ by continuity of $u$. The left hand side of \eqref{eq:almost_every} converges to $\E[u(y-X_t)\1_{\{X_t<y\}}]$ by dominated convergence theorem and the continuity $u$. This finishes the proof.
  \end{proof}

    %{\color{red}
 %   For each non-negative measurable function $f$, we have
%    \begin{equation*}
%    \begin{split}
%    \int_{a\in\mathbb{R}}{\rm d}a f(a)\E(u(a-X_{t})\1_{\{X_{t}<a\}})&= \E\left(\int_{a\in\mathbb{R}}{\rm d}a f(a)u(a-X_{t}), X_{t}<a\right)\\
%    &=\int_{a\in\mathbb{R}}{\rm d}x\1_{\{x>0\}} u(x)\E_{x}\left(f(X_{t})\right)\\
%    &=\int^{\infty}_{0}\mathrm{d}s\E\left(f(X_{t+s})\1_{\{X_{t+s}>0\}}\right)\\
%    &\leq \int_{x>0}u(x)f(x){\rm d}x.
%    \end{split}
%    \end{equation*}
%This implies that for  $t\geq 0,$ $$u(a)\geq \E\left(u(a-X_{t})\1_{\{X_{t}<a\}}\right),\qquad \text{a.e.}\ a> 0.$$ The continuity of $u$ and the dominated convergence theorem implies that the above inequality holds for every $a\geq 0.$ This implies that for any $a>0$ the function $x\mapsto u(a-x)\1_{\{x<a\}}$ is excessive.
%For the second assertion note that $\tau_{(b,a)<\infty}$ if and only if $b<X_{\tau^{+}_{b}}<a.$
%\begin{equation*}
%\begin{split}
%\E[u(a- X_{\tau_{(b,a)}})\1_{\{\tau_{(b,a)}<\infty\}}]&=\E\left(u(a-X_{\tau^{+}_{b}})\1_{\{b<X_{\tau^{+}_{b}}<a\}}\right)\\
%&=U(\{0\})\int\Pi({\rm d}w)\1_{\{b<w<a\}}u(a-w)\\
%&+ \int^{b}_{0}U({\rm d}z)\int\Pi({\rm d}w)\1_{\{b<z+w<a\}}u(a-(z+w))....
%\end{split}
%\end{equation*}}
%{\bf VR: \color{blue}Do we really need that property???? I do not see  how to prove it
%}

  For each $y>0$ and $x\in [0,y)$ define a new measure $\P_x^{\circ,y}$ by setting
  \begin{equation}
    \frac{{\rm d}\P^{\circ,y}_x}{{\rm d}\P_x}\Big|_{\mathcal F_t} = \frac{u(y-X_t)}{u(y-x)}\1_{\{X_t \leq y\}}.
    \label{COMcirc}
  \end{equation}
  Again referring to work on conditioned L\'evy processes in \cite{C}, we can guess that the above change of measure corresponds to conditioning the subordinator $X$ to be continuously absorbed at the point $y$. More precisely, we have the following result in the spirit of Proposition 3 of \cite{C}, whose proof we also mimic.

  \begin{theorem} Assume (DA). Then for all $0 \leq x<b <y\leq a$,
    \[
      \P^{\circ,y}_x(A, t< \tau^+_b) = \lim_{\varepsilon\downarrow0}\P_x(A, t< \tau^+_b \,|\, X_{\tau^+_y-}\geq y-\varepsilon), \qquad t\geq 0, A\in\mathcal{F}_t.
    \]
  \end{theorem}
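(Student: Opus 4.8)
The plan is to write the conditional probability as a ratio of two unconditional probabilities and to evaluate both using the classical first-passage (overshoot--undershoot) identity for subordinators. First I would write
\[
  \P_x(A, t<\tau^+_b \mid X_{\tau^+_y-}\ge y-\varepsilon)
  =\frac{\P_x(A, t<\tau^+_b,\, X_{\tau^+_y-}\ge y-\varepsilon)}{\P_x(X_{\tau^+_y-}\ge y-\varepsilon)}.
\]
Since $b<y$, for every $\varepsilon<y-b$ the event $\{t<\tau^+_b\}$ forces $X_t\le b<y-\varepsilon$, so $\{X_{\tau^+_y-}\ge y-\varepsilon\}$ is determined by the path after time $t$. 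Applying the Markov property at $t$ together with spatial homogeneity, the numerator becomes $\E_x[\1_{\{A,\,t<\tau^+_b\}}\,g_\varepsilon(X_t)]$ where $g_\varepsilon(z):=\P_0\big(X_{\tau^+_{y-z}-}\ge (y-z)-\varepsilon\big)$, while the denominator is exactly $g_\varepsilon(x)$.

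Next I would compute $g_\varepsilon$. By the first-passage identity for subordinators (see \cite{bertoin}), for $c>0$ the undershoot law is $\P_0(X_{\tau^+_c-}\in {\rm d}v)=U({\rm d}v)\,\overline{\nu}(c-v)$ on $[0,c)$, together with the creeping atom $\P_0(X_{\tau^+_c}=c)=\kappa\,u(c)$. Under (DA), so that $U({\rm d}v)=u(v)\,{\rm d}v$, a change of variable gives
\[
  g_\varepsilon(z)=\kappa\,u(y-z)+\int_0^\varepsilon u(y-z-s)\,\overline{\nu}(s)\,{\rm d}s.
\]
Writing $K(\varepsilon):=\int_0^\varepsilon\overline{\nu}(s)\,{\rm d}s$, which is finite because $\int_{(0,\infty)}(1\wedge x)\nu({\rm d}x)<\infty$ and satisfies $K(\varepsilon)\downarrow0$, continuity of $u$ yields $g_\varepsilon(z)=u(y-z)\,[\kappa+K(\varepsilon)]+o(K(\varepsilon))$, where the error is uniform for $y-z$ in the compact interval $[y-b,\,y-x]$ via the uniform modulus of continuity of $u$ there. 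Dividing numerator and denominator of $g_\varepsilon(z)/g_\varepsilon(x)$ by $\kappa+K(\varepsilon)>0$ and letting $\varepsilon\downarrow0$ then produces the pointwise limit $u(y-z)/u(y-x)$. This single computation handles the creeping case $\kappa>0$ and the pure-jump case $\kappa=0$ simultaneously, since the normalising factor $\kappa+K(\varepsilon)$ cancels in both regimes.

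To conclude I would pass the limit through the expectation. On $\{t<\tau^+_b\}$ we have $X_t\in[x,b]$, so $y-X_t$ lies in the compact set $[y-b,\,y-x]\subset(0,\infty)$, on which $u$ is continuous and strictly positive, giving bounds $0<m\le u\le M$; the same estimate shows that $g_\varepsilon(X_t)/g_\varepsilon(x)$ is bounded by a constant independent of $\varepsilon$ for $\varepsilon$ small, so bounded convergence applies and
\[
  \lim_{\varepsilon\downarrow0}\P_x(A,\,t<\tau^+_b \mid X_{\tau^+_y-}\ge y-\varepsilon)
  =\E_x\!\left[\1_{\{A,\,t<\tau^+_b\}}\frac{u(y-X_t)}{u(y-x)}\right].
\]
Finally, since $\{t<\tau^+_b\}$ forces $X_t\le b<y$, the indicator $\1_{\{X_t\le y\}}$ appearing in \eqref{COMcirc} is redundant on this event, so the right-hand side is precisely $\P^{\circ,y}_x(A,\,t<\tau^+_b)$, as required.

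I expect the main obstacle to be the uniform asymptotic analysis of $g_\varepsilon$ and the resulting interchange of limit and expectation: one must control $g_\varepsilon(z)/g_\varepsilon(x)$ uniformly in $z\in[x,b]$ and in $\varepsilon$, unify the $\kappa>0$ and $\kappa=0$ regimes through the common normaliser $\kappa+K(\varepsilon)$, and ensure $u$ is bounded away from $0$ on the relevant compact so that the ratio stays bounded. The careful bookkeeping of the creeping atom in the undershoot formula is the other delicate point, since omitting it would break the argument exactly when $X$ creeps across $y$.
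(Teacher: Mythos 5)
Your proposal is correct and follows essentially the same route as the paper's proof: the Markov property at time $t$ to reduce everything to the ratio $\P_{X_t}(X_{\tau^+_y-}\geq y-\varepsilon)/\P_x(X_{\tau^+_y-}\geq y-\varepsilon)$, the overshoot/creeping identity $\P_x(X_{\tau^+_y-}\geq y-\varepsilon)=\kappa u(y-x)+\int_0^\varepsilon u(y-x-v)\overline{\nu}(v)\,{\rm d}v$, and a uniform bound on this ratio over the compact range of $X_t$ so that bounded convergence applies. The only cosmetic difference is that you unify the cases $\kappa>0$ and $\kappa=0$ by normalising with $\kappa+\int_0^\varepsilon\overline{\nu}(v)\,{\rm d}v$ and using uniform continuity of $u$, where the paper treats $\kappa>0$ directly and invokes L'H\^opital's rule when $\kappa=0$.
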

  \begin{proof}
    Fix $0 \leq x<b <y$ and suppose that $\varepsilon<y-x$. Applying the Markov property at time $t$, we have
    \begin{equation}\label{eq:ph_first_eq}
      \P_x(A, t< \tau^+_b \,|\, X_{\tau^+_y-}\geq y-\varepsilon)=\E_x\left[\1_{\{A,\, t< \tau^+_{b}<\infty\}}\frac{\P_{X_{t}}(X_{\tau^+_y-}\geq y-\varepsilon)}{\P_x(X_{\tau^+_y-}\geq y-\varepsilon)}\right]
    \end{equation}
    where we note that $\tau^+_b<\infty$ thanks to our conditioning.
    Appealing to Proposition III.2 and Theorem III.5 in \cite{bertoin},
    \begin{equation}\label{eq:overshoot}
      \P_x(X_{\tau^+_y-}\geq y-\varepsilon) = \P(X_{\tau^+_{(y-x)-}} \geq y-x-\varepsilon) = \kappa u(y-x)+\int_0^\varepsilon u(y-x-v)\overline{\nu}(v){\rm d}v.
    \end{equation}
    Using the continuity of $u$ and \eqref{eq:overshoot} we get that for any $x'\in [0,y)$
    \[
    \lim_{\varepsilon \downarrow 0} \frac{\P_{x'}(X_{\tau^+_y-}\geq y-\varepsilon)}{\P_x(X_{\tau^+_y-}\geq y-\varepsilon)} %= \lim_{\varepsilon \downarrow 0} \frac{u(y-x')\left(\kappa + \int_0^\varepsilon \overline{\nu}(v) {\rm d}v\right)}{u(y-x)\left(\kappa + \int_0^\varepsilon \overline{\nu}(v) {\rm d}v\right)} 
    = \frac{u(y-x')}{u(y-x)},
    \]
    where, if $\kappa>0$, then the limit is easy to see and, if $\kappa = 0$, we can appeal to L'H\^{o}pital's rule. 
Furthermore, because $u$ is assumed to be continuous on $(0,\infty)$ we have that for any $0\leq x^{\prime}\leq b<y$
$$\lim_{\varepsilon\downarrow 0}\frac{\P_{x'}(X_{\tau^+_y-}\geq y-\varepsilon)}{\kappa + \int_0^\varepsilon \overline{\nu}(v) {\rm d}v}\leq \sup_{z\in [\frac{1}{2}(y-b),y]}u(z)<\infty.$$
Hence by bounded convergence we have that
    \[
    \lim_{\varepsilon \downarrow 0}\E_x\left[\1_{\{A,\, t< \tau_{b}^+<\infty\}}\frac{\P_{X_{t}}(X_{\tau^+_y-}\geq y-\varepsilon)}{\P_x(X_{\tau^+_y-}\geq y-\varepsilon)}\right] = \E_x\left[\1_{\{A,\, t< \tau_{b}^+<\infty\}} \frac{u(y-X_t)}{u(y-x)}\right].
    \]
    Taking limits in \eqref{eq:ph_first_eq} and comparing to above finishes the proof.
  \end{proof}

  The following theorem shows that, under the assumption (DA),
  conditioning a subordinator to stay in a strip may be seen as first picking a point $U$-uniformly in $[0,a)$, after which the subordinator is conditioned to continuously hit that point.

  \begin{theorem}\label{pathdecomp}
  Assume (DA) and let $0 \leq x<y<a$. Then conditionally on $X_{\zeta-}=y$ the law of $X$ under $\mathbb{P}^\downarrow_x$ is that of $X$ under $\P^{\circ,y}_x$.
  \end{theorem}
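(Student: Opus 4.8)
The plan is to prove the claim as a disintegration of $\mathbb{P}^\downarrow_x$ over the terminal value $X_{\zeta-}$. By Lemma \ref{lem:maximum} together with (DA), under $\mathbb{P}^\downarrow_x$ the variable $X_{\zeta-}$ has on $(x,a)$ the density $\rho(y):=u(y-x)/U(a-x)$. Writing $\mathbb{E}^\downarrow_x$ for expectation under $\mathbb{P}^\downarrow_x$ and $\tau^+_b=\inf\{t\ge0:X_t>b\}$, the key reduction is to match, for each $b\in(x,a)$, the joint law of $\big(X_{\zeta-},(X_s)_{s\le\tau^+_b}\big)$ under $\mathbb{P}^\downarrow_x$ with the object built from the family $\{\mathbb{P}^{\circ,y}_x\}$. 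The point of stopping at $\tau^+_b$ is that on $\{t<\tau^+_b\}$ the (alive) process sits at $X_t\le b<y$, so the awkward indicators $\1_{\{X_t<a\}}$ in \eqref{eq:Pc_def} and $\1_{\{X_t\le y\}}$ in \eqref{COMcirc} both equal $1$, which is what makes the computation transparent.

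The core identity I would establish is that, for every $t\ge0$, $b\in(x,a)$, $A\in\mathcal F_t$ and bounded measurable $g$ supported in $(b,a)$,
\[
\mathbb{E}^\downarrow_x\big[\1_A\,\1_{\{t<\tau^+_b\}}\,g(X_{\zeta-})\big]=\int_x^a g(y)\,\mathbb{P}^{\circ,y}_x\big(A\cap\{t<\tau^+_b\}\big)\,\rho(y)\,\mathrm dy .
\]
For the left-hand side I would apply the Markov property of $(X,\mathbb{P}^\downarrow_x)$ at time $t$; this is legitimate because $\mathbb{P}^\downarrow_x$ is a Doob $h$-transform of $(X,\P_x)$ by the time-homogeneous function $z\mapsto U(a-z)\1_{\{z<a\}}$. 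Paths that have died below $b$ contribute nothing, since $g$ vanishes on $[0,b]$, so one may freely restrict to $\{t<\zeta\}$, where monotonicity of $X$ gives $X_{\zeta-}=(X_{\zeta-})\circ\theta_t$. Inserting the law of the terminal value of the restarted process from Lemma \ref{lem:maximum}, namely $\mathbb{E}^\downarrow_z[g(X_{\zeta-})]=U(a-z)^{-1}\int_z^a g(w)u(w-z)\,\mathrm dw$, and converting the remaining $\mathbb{P}^\downarrow_x$-expectation back to $\P_x$ via \eqref{eq:Pc_def}, cancels the factor $U(a-X_t)$ and (using $X_t\le b<a$) collapses the left-hand side to $U(a-x)^{-1}\int_x^a g(w)\,\mathbb{E}_x[\1_{A\cap\{t<\tau^+_b\}}u(w-X_t)]\,\mathrm dw$ after Fubini. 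For the right-hand side I would expand $\mathbb{P}^{\circ,y}_x(A\cap\{t<\tau^+_b\})$ by \eqref{COMcirc}, again using $\1_{\{X_t\le y\}}=1$, to get $u(y-x)^{-1}\mathbb{E}_x[\1_{A\cap\{t<\tau^+_b\}}u(y-X_t)]$; multiplying by $\rho(y)$ cancels the $u(y-x)$, and the two sides coincide term by term.

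It then remains to pass from this identity to the statement about regular conditional laws. Read as a disintegration, it says that for Lebesgue-a.e. $y\in(b,a)$ the conditional law of the pre-$\tau^+_b$ path given $X_{\zeta-}=y$ agrees with $\mathbb{P}^{\circ,y}_x$ on that portion of the path. A monotone-class argument over a countable generating family of events $A\cap\{t<\tau^+_b\}$ (which generate $\mathcal F_{\tau^+_b-}$) upgrades agreement on product test functions to equality of measures on $\mathcal F_{\tau^+_b-}$, off a single Lebesgue-null set. Letting $b\uparrow y$ along a sequence $b_n$ and intersecting the countably many exceptional sets, one obtains, for a.e.\ $y$, agreement of the two laws on every $\mathcal F_{\tau^+_{b_n}-}$; since under both measures the path climbs to $y$ and is absorbed there at time $\zeta=\lim_n\tau^+_{b_n}$, these $\sigma$-algebras increase to the $\sigma$-algebra of the whole pre-$\zeta$ path, which yields the claimed identification (for all $y\in(x,a)$ after invoking continuity in $y$ under (DA)).

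I expect the main obstacle to be this last limiting step rather than the algebra: one must check that $\bigvee_n\mathcal F_{\tau^+_{b_n}-}$ genuinely exhausts the information in the whole conditioned path, i.e.\ that no mass is created \emph{at the terminal instant} beyond what the increasing first-passage times $\tau^+_{b_n}\uparrow\zeta$ already record. This is precisely where the continuous absorption at $y$ — guaranteed under (DA), as in the preceding theorem characterising $\mathbb{P}^{\circ,y}_x$ — is essential, and where the bookkeeping of null sets and the measurability of $y\mapsto\mathbb{P}^{\circ,y}_x(\cdot)$ must be handled with care. By contrast, the integrability needed for Fubini is immediate, since $u$ is continuous and hence locally bounded on $(0,\infty)$.
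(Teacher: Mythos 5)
Your core disintegration identity is correct, and it is essentially the paper's own computation: apply the Markov property at time $t$ under $\mathbb{P}^\downarrow_x$, insert the law of the terminal value of the restarted process from Lemma \ref{lem:maximum}, and unwind the two $h$-transforms \eqref{eq:Pc_def} and \eqref{COMcirc} so that the factors $U(a-X_t)$ and $u(y-x)$ cancel. The difference is that the paper carries this out for \emph{all} $A\in\mathcal{F}_t$ and arbitrary $f(X_{\zeta-})$, keeping the indicator $\1_{\{X_t<y\}}$ that Lemma \ref{lem:maximum} produces --- which is precisely the indicator required by \eqref{COMcirc} --- whereas you restrict to events $A\cap\{t<\tau^+_b\}$ with $g$ supported in $(b,a)$ so that all indicators equal $1$. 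That restriction is what forces your final exhaustion step, and that step is where there is a genuine gap.

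The gap is your claim that $\bigvee_n\mathcal{F}_{\tau^+_{b_n}-}$ (with $b_n\uparrow y$) captures the whole conditioned path because ``continuous absorption at $y$ is guaranteed under (DA).'' It is not. Assumption (DA) only requires $U({\rm d}x)$ to have a continuous density on $(0,\infty)$ and does not exclude finite jump activity: a compound Poisson subordinator with, say, exponential jumps satisfies (DA), since its potential measure is $\lambda^{-1}\delta_0({\rm d}x)+(\mu/\lambda)\,{\rm d}x$. For such a process, under both $\mathbb{P}^\downarrow_x(\cdot\,|\,X_{\zeta-}=y)$ and $\P^{\circ,y}_x$, the path jumps onto its terminal value $y$ and sits there for a strictly positive holding time before being killed; the length of that holding time, i.e.\ $\zeta-\lim_n\tau^+_{b_n}$, is not measurable with respect to $\bigvee_n\mathcal{F}_{\tau^+_{b_n}-}$, so agreement of the two laws on these $\sigma$-algebras cannot identify them as laws of the killed path, lifetime included, which is what the theorem asserts (and which does hold in this case: both holding times are exponential of rate $\lambda$). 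Even in the infinite-activity case, continuous absorption under the conditioned measures is itself a statement needing proof --- it follows from absolute continuity of $\mathbb{P}^\downarrow_x$ and $\P^{\circ,y}_x$ with respect to $\P_x$ on $\mathcal{F}_t\cap\{t<\zeta\}$, but you do not supply this argument. The fix is simply to drop the stopping at $\tau^+_b$: your computation goes through verbatim for all $A\in\mathcal{F}_t$ because the density from Lemma \ref{lem:maximum} arrives with exactly the indicator $\1_{\{X_t<y\}}$ that \eqref{COMcirc} needs; the events $A\cap\{t<\zeta\}$, $A\in\mathcal{F}_t$, $t\geq 0$, form a $\pi$-system generating the full $\sigma$-algebra of the killed path, lifetime included, and then the monotone-class and null-set bookkeeping you describe (plus continuity of $y\mapsto u(y-\cdot)$) finishes the proof. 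This is the paper's route.
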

  \begin{proof}
    Fix $0 \leq x<y<a$.
  We start by observing that by Lemma~\ref{lem:maximum}, 
  $$\P^\downarrow_{x}(X_{\zeta-}\in {\rm d}y)=\frac{u(y-x)}{U(a-x)}\1_{\{x<y<a\}}{\rm d}y.$$ This fact, together with the Markov property at time $t$ under the measure $\P^{\downarrow},$ implies that for arbitrary $0\leq x<a,$ $t\geq 0,$ $A\in \mathcal{F}_{t},$ and $f:[0,\infty)\to [0,\infty)$ positive and measurable, we have that
\begin{equation*}
\begin{split}
\E^{\downarrow}_{x}\left(1_{\{A,\ t<\zeta\}}f(X_{\zeta-})\right)&=\E^{\downarrow}_{x}\left(1_{\{A,\ t<\zeta\}}\E^{\downarrow}_{X_{t}}\left(f(X_{\zeta-})\right)\right)\\
&=\int^{a}_{0}{\rm d}y f(y) \E^{\downarrow}_{x}\left(1_{\{A, t<\zeta\}}1_{\{X_{t}<y\}}\frac{u(y-X_{t})}{U(a-X_{t})}\right)\\
&=\int^{a}_{0}{\rm d}y f(y) \frac{u(y-x)}{U(a-x)}1_{\{x<y\}} \E_{x}\left(1_{\{A, t<\zeta\}}1_{\{X_{t}<y\}}\frac{u(y-X_{t})}{u(y-x)}\right)\\
&=\int^{a}_{0}{\rm d}y f(y) \frac{u(y-x)}{U(a-x)}1_{\{x<y\}}{\P^{\circ,y}_x}\left(A, t<\zeta \right);
\end{split}
\end{equation*}
where in the third equality we used the definition of the measure $\P^{\downarrow}$.
    Combing this with the law of $X_{\zeta-}$ we see that
    \[
      \E^\downarrow_x\left[f(X_{\zeta-})\P(A;t<\zeta |X_{\zeta-})\right]=\E\left[f(X_{\zeta-})\P^{\circ,X_{\zeta-}}_x\left(A, t<\zeta \right)\right]
    \]
  which concludes the proof.

%  Let $f,g : \R\rightarrow \R$ be bounded measurable functions, then for any $t \geq 0$,
%    \begin{align*}
%      \E^{\downarrow}_{x}\left[f(X_t)\1_{\{t<\zeta\}}g(X_{\zeta-})\right]&=\E^{\downarrow}_{x}[f(X_t)\1_{\{ t<\zeta\}}\E^{\downarrow}_{X_{t}}[g(X_{\zeta-})]\\
%      &= \int_x^a {\rm d}y g(y)\E^\downarrow_x\left[f(X_t)\1_{\{t < \zeta\}} \frac{u(y-X_t)}{U(a-X_t)}\right]\\
%      &= \int_x^a {\rm d}y g(y) \E_x\left[f(X_t)\1_{\{X_t < a\}}\frac{u(y-X_t)}{U(a-x)}\right]\\
%      &= \int_x^a {\rm d}y g(y) \frac{u(a-x)}{U(a-x)} \E^{\circ}_{x,y}[f(X_t)\1_{\{t< \zeta\}}]
%    \end{align*}
%  The result then follows.
  \end{proof}

  \subsection{Interpreting the self-similar case}\label{pssmp}

  To get another perspective on the pathwise behaviour of conditioned subordinators, let us restrict our attention to $\alpha$-stable subordinators, where the additional benefits of self-similarity can be explored.
  Recall that a subordinator $X$ is called $\alpha$-stable if for all $t \geq 0$ and $c >0$
  \begin{equation}\label{eq:alpha_scaling}
    (cX_{c^{-\alpha}t}: t\geq 0) \overset{d}{=}  (X_{ t}: t\geq 0),
  \end{equation}
  where it must necessarily hold that  $\alpha \in (0,1)$.
  Henceforth suppose that $(X,\P)$ is an $\alpha$-stable subordinator. In particular, we restrict ourselves to issuing the process from the origin without loss of generality in the forthcoming analysis. It is known that
  \[
    \E[{\rm e}^{-\lambda X_1}] ={\rm e}^{-C\lambda^\alpha},\qquad \lambda\geq 0,
  \]
  for some constant $C>0$. Without loss of generality, we henceforth assume that $C=1$.
  From \eqref{eq:alpha_scaling} it follows that $U(x)=x^\alpha U(1)$ for all $x \geq 0$ and  hence
  \begin{align}
    &\frac{{\rm d}\mathbb{P}^\downarrow}{{\rm d}\P}\Big|_{\mathcal F_t}=\left(\frac{a-X_t}{a}\right)^\alpha \1_{\{X_t \leq a\}},\label{eq:ss_condition}\\
    &\frac{{\rm d}\P^{\circ}}{{\rm d}\P}\Big|_{\mathcal F_t}=\left(\frac{a-X_t}{a}\right)^{\alpha-1} \1_{\{X_t \leq a\}}.\label{eq:ss_hit}
  \end{align}
  Our goal here is to give a different pathwise interpretation of $\mathbb{P}^\downarrow$ and $\P^{\circ,y}$ by considering the above changes of measure in the context of the Lamperti transform, see~ \cite{L}.

  For each $a>0$ and $t \geq 0$ define
  \[
    Y^{(a)}_t=
    \begin{cases}
      a-X_t & \text{ if } X_t < a,\\
      0& \text{ otherwise.}
    \end{cases}
  \]
  It is not hard to check that under each of the measures $\P$, $\mathbb{P}^\downarrow$ and $\P^{\circ,y}$, $Y^{(a)}$ is a positive-valued Markov process issued from $a$ with the following additional property: for every constant $c>0$,
  \[
    (cY^{(a)}_{c^{-\alpha} t}: t\geq 0)\overset{d}{=}(Y^{(ca)}_t:t \geq 0).
  \]
  Such Markov processes are known in the literature as positive self-similar Markov processes (pssMp). The classical Lamperti transform, \cite{L},  allows us to write
  \begin{equation}
    Y^{(a)}_t = a {\rm e}^{\xi_{\varphi(a^{-\alpha}t)}}, \qquad t<\varsigma: =  \inf\{s>0 : Y^{(a)}_s =0\},
    \label{Lampertirep}
  \end{equation}
  where $\xi=(\xi_t:t \geq 0)$ is the negative of a subordinator which is killed at an independent and exponentially distributed random time and
  \begin{equation}\label{eq:lamperti_time_change}
    \varphi(s):=\inf\left\{t >0: \int_0^t {\rm e}^{\alpha \xi_u}\,{\rm d}u > s\right\}.
  \end{equation}

  We describe how the three processes $(\xi,\P)$, $(\xi,\mathbb{P}^\downarrow)$ and $(\xi,\P^{\circ,y})$ are related. We first begin by identifying the Laplace exponent of the process $\xi$. The next result is known, as the process $Y$ is a special example of a stable process killed on exiting the lower half-line, which has been discussed e.g. in \cite{CC, HR}, however, we re-establish it here in a different way for convenience.

  \begin{lemma}\label{lem:laplace_killed}
    For $\lambda \geq 0$,
    \[
      \Phi(\lambda):=-\log\E[{\rm e}^{\lambda \xi_1}]=%a^{-\alpha}
      \frac{\Gamma(1+\lambda)}{\Gamma(1+\lambda-\alpha)}
    \]
  \end{lemma}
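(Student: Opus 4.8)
The plan is to read off the entire Lévy triplet of $\xi$ directly from the Lamperti correspondence \eqref{Lampertirep}--\eqref{eq:lamperti_time_change} applied to the pssMp $Y^{(a)}=a-X$, and then to evaluate the resulting Lévy--Khintchine integral in closed form. By self-similarity we may take $a=1$, so that $Y_t=1-X_t$ started from $1$ and $Y_t=e^{\xi_{\varphi(t)}}$. First I would record the characteristics of the stable subordinator itself: the identity $\phi(\lambda)=\lambda^\alpha$ forces $\kappa=0$ and $\nu({\rm d}s)=\frac{\alpha}{\Gamma(1-\alpha)}s^{-1-\alpha}{\rm d}s$, whence $\overline{\nu}(r)=r^{-\alpha}/\Gamma(1-\alpha)$.

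Next I would use the time change to convert $X$-rates into $\xi$-rates. Differentiating the defining relation $\int_0^{\varphi(t)}e^{\alpha\xi_v}{\rm d}v=t$ gives $\varphi'(t)=Y_t^{-\alpha}$, so that when $Y=r$ the real-time clock advances at rate $r^\alpha$ per unit of $\xi$-time. A jump of $X$ of size $s$ made while $Y=r$ produces a jump of $\log Y$, hence of $\xi$, equal to $\theta=\log(1-s/r)$, and it kills the process (absorbs $Y$ at $0$) exactly when $s\geq r$. Multiplying the intensity $\nu({\rm d}s)$ by $r^\alpha$ and changing variables $s=r(1-e^\theta)$, the powers of $r$ cancel --- this is the decisive manifestation of self-similarity --- leaving the space-homogeneous data
\[
\Pi({\rm d}\theta)=\frac{\alpha}{\Gamma(1-\alpha)}\frac{e^\theta}{(1-e^\theta)^{1+\alpha}}{\rm d}\theta,\quad \theta<0,\qquad k=r^\alpha\,\overline{\nu}(r)=\frac{1}{\Gamma(1-\alpha)},
\]
with no drift (as $\kappa=0$). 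This confirms that $\xi$ is the negative of a finite-variation subordinator killed at rate $k$, and yields $\Phi(\lambda)=k+\int_{-\infty}^0(1-e^{\lambda\theta})\Pi({\rm d}\theta)$.

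Finally I would evaluate this integral. Substituting $w=e^\theta$ turns it into $\frac{\alpha}{\Gamma(1-\alpha)}\int_0^1(1-w^\lambda)(1-w)^{-1-\alpha}{\rm d}w$; an integration by parts against $v=\alpha^{-1}(1-w)^{-\alpha}$ produces a boundary term that cancels the constant $k$ and leaves the Beta integral $\int_0^1 w^{\lambda-1}(1-w)^{-\alpha}{\rm d}w=\Gamma(\lambda)\Gamma(1-\alpha)/\Gamma(\lambda+1-\alpha)$. Using $\lambda\Gamma(\lambda)=\Gamma(\lambda+1)$ then collapses everything to $\Phi(\lambda)=\Gamma(1+\lambda)/\Gamma(1+\lambda-\alpha)$.

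The main obstacle is not this computation but the justification of the first step, namely rigorously extracting $\Pi$ and $k$ from the Lamperti representation, i.e. verifying that the time-changed jump structure genuinely is that of a space-homogeneous killed Lévy process. I expect this to rest on the standard Lamperti correspondence between positive self-similar Markov processes and Lévy processes, so the real work lies in invoking it cleanly and in checking the change of variables together with the integrability $\int_{-\infty}^0(1\wedge|\theta|)\Pi({\rm d}\theta)<\infty$ near $\theta=0$, which holds precisely because $\alpha<1$. An alternative that sidesteps the heuristic entirely would be to compute the moments of $Y$ (equivalently the Mellin transform of the exponential functional of $\xi$) directly from the overshoot identities of Proposition III.2 and Theorem III.5 in \cite{bertoin} already quoted above, but the jump-intensity route seems the most transparent.
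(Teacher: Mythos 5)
Your proposal is correct, and it takes a genuinely different route from the paper's own proof. The paper splits the work in two: it first computes $\E[{\rm e}^{\lambda\xi_{\varsigma-}}]=q/(q+\Phi^*(\lambda))$ and identifies the law of ${\rm e}^{\xi_{\varsigma-}}=(a-X_{\tau^+_a-})/a$ with the known stable undershoot (generalised arcsine) law from \cite[Exercise 5.8]{kyprianou}, which after a Beta integral yields $\Phi(\lambda)=q\,\Gamma(1+\lambda)\Gamma(1-\alpha)/\Gamma(1+\lambda-\alpha)$; it then pins down the killing rate $q=1/\Gamma(1-\alpha)$ by exactly the rate comparison you employ, matching the Poissonian rate $\overline{\nu}(Y^{(a)}_{t-})\,{\rm d}t$ at which $X$ jumps over $a$ against the Lamperti time-changed rate $q(Y^{(a)}_t)^{-\alpha}{\rm d}t$. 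You instead apply that same time-change/jump-intensity argument not only to the killing but to every jump, extracting the full characteristics $(\Pi,k)$ of $\xi$ and then evaluating the L\'evy--Khintchine integral; your computation is right, and the cancellation you note (the boundary term of the integration by parts against $\alpha^{-1}(1-w)^{-\alpha}$ absorbing $k$) plays the role of the paper's relation $\Phi=q+\Phi^*$. What the paper's route buys is economy of rigor: the only infinitesimal identification needed is of the single scalar $q$, the shape of $\Phi$ coming from a ready-made distributional identity. What your route buys is self-containedness and extra information: you never invoke the undershoot law, and you recover the explicit jump measure
\[
\Pi({\rm d}\theta)=\frac{\alpha}{\Gamma(1-\alpha)}\frac{{\rm e}^{\theta}}{(1-{\rm e}^{\theta})^{1+\alpha}}\,{\rm d}\theta,\qquad \theta<0,
\]
which is precisely the description of the Lamperti transform of a killed stable process found in \cite{CC, HR}, the references the paper cites as the place where the lemma is already known. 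The gap you flag, namely justifying rigorously that the time-changed jump structure is that of a spatially homogeneous killed L\'evy process, is genuine but no more serious than what the paper itself leaves informal in identifying $q$; it is closed in the standard way by the compensation formula for the Poisson point process of jumps of $X$, and your integrability check near $\theta=0$ (valid because $\alpha<1$) is the right verification to accompany it.
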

  \begin{proof}
    Let $\xi^*$ be the L\'evy process which  is equal in law to $\xi$ but without killing.  For $\lambda \geq 0$, let $\Phi^*(\lambda):=-\log\E[{\rm e}^{\lambda \xi^*_1}]$ and write $q$ for  the rate at which $\xi$ is killed. Note that
    \[
      \tau_a^+:=\inf\{t \geq 0: X_t >a\}= \inf\{t \geq 0: Y^{(a)}_t=0\}.
    \]
    Then it follows that for each $\lambda\geq 0$,
    \begin{equation}
      \frac{q}{q+\Phi^*(\lambda)}=\E[{\rm e}^{\lambda\xi^*_{\e_q}}]=\E[{\rm e}^{\lambda\xi_{\varsigma-}}] = \E\left[\left(\frac{a-X_{\tau^+_a-}}{a}\right)^{\lambda}\right].
      \label{develop}
    \end{equation}
    The random variable inside the expectation on the right hand side is known as an undershoot and it's law is given by
    \[
      \P\left(\frac{a-X_{\tau^+_a-}}{a} \in {\rm d}y\right)= \frac{y^{-\alpha}(1-y)^{\alpha-1}}{\Gamma(1-\alpha)\Gamma(\alpha)} \, {\rm d}y,\qquad y\in (0,a]
    \]
    see for example \cite[Exercise 5.8]{kyprianou}. Developing the right hand side of \eqref{develop} we get
    \begin{align*}
      \frac{q}{q+\Phi^*(\lambda)}&=\frac{1}{\Gamma(1-\alpha)\Gamma(\alpha)}\int_0^1 y^{\lambda-\alpha}(1-y)^{\alpha-1}\,{\rm d}y\\
      &=  \frac{1}{\Gamma(1-\alpha)\Gamma(\alpha)} \frac{\Gamma(1+\lambda -\alpha)\Gamma(\alpha)}{\Gamma(1+\lambda)} \\
      & = \frac{\Gamma(1+\lambda -\alpha)}{\Gamma(1-\alpha)\Gamma(1+\lambda)}, \qquad \lambda\geq0.
    \end{align*}
    Since $\Phi(\lambda)=q+\Phi^*(\lambda)$, we have that
    \begin{equation}\label{eq:before_killing}
      \Phi(\lambda)=q\frac{\Gamma(1+\lambda)\Gamma(1-\alpha)}{\Gamma(1+\lambda-\alpha)}
    \end{equation}
    and hence it suffices to show that $q=1/\Gamma(1-\alpha)$.

    To this end, let $\nu$ be the L\`evy measure of $(X,\P)$, then it is known that (\cite[Excercise 5.8 (i)]{kyprianou}) for any $x >0$, $\nu(x,\infty)=x^{-\alpha}/\Gamma(1-\alpha)$. The Poissonian structure of the jumps of $X$ implies that for any $t \geq 0$, on $\{X_{t-}<a\}$, the rate at which $X$ exceeds $a$, and hence the rate at which $Y^{(a)}$ is killed, is
    \[
      \nu(a-X_{t-},\infty) {\rm d}t = \frac{(Y^{(a)}_{t-})^{-\alpha}}{\Gamma(1-\alpha)}{\rm d}t.
    \]
    On the other hand, referring to the Lamperti representation \eqref{Lampertirep},  noting in particular that
    \[
      \int_0^{\varphi(a^{-\alpha}t)} {\rm e}^{\alpha\xi_s}{\rm d}s
      = a^{-\alpha}t, \qquad t<\varsigma,
    \]
    the process $Y^{(a)}$ is killed at rate
    \[
      q{\rm d}\varphi(a^{-\alpha}t) = q a^{-\alpha}{\rm e}^{-\alpha\xi_{\varphi(a^{-\alpha} t)}} {\rm d}t= q(Y^{(a)}_t)^{-\alpha}{\rm d}t.
    \]
    Comparing these two rates, we see that $q = 1/\Gamma(1-\alpha)$, and the proof is completed.
  \end{proof}

  Now noting that $\varphi(a^{-\alpha }t)$ is a stopping time in the natural filtration of $\xi$,
  if we now revisit the change of measures  \eqref{eq:ss_condition} and \eqref{eq:ss_hit}, we see that they are equivalent to performing exponential changes of measure with respect to the law of $\xi$ with the exponential (super)martingales
  \[
    {\rm e}^{\alpha\xi_t}
    \text{ and }
    {\rm e}^{(\alpha -1)\xi_t}, \qquad t\geq0,
  \]
  respectively. Note that the first of these two is a strict supermartingale on account of the fact that $\Phi(\alpha)>0$. The second is a martingale thanks to the convenience that $\Phi(\alpha-1) = 0$.
  Moreover, under these exponential changes of measure, we find the new Laplace exponents of $\xi$ become
  \[
    \Phi^{\downarrow}(\lambda)=\frac{\Gamma(1+\lambda +\alpha)}{\Gamma(1+\lambda)}
    \text{ and }
    \Phi^{\circ}(\lambda)= \frac{\Gamma(\alpha+\lambda)}{\Gamma(\lambda)},
    \qquad \lambda \geq 0,
  \]
  respectively.

  As we might expect, given that $(X, \mathbb{P}^\downarrow)$ is a killed Markov process, the corresponding pssMp, $Y^{(a)}$, has Lamperti transform which reveals a killed underlying subordinator $-\xi$. That is to say, $\Phi^\downarrow(0)>0$. Similarly as we know that $(X, \mathbb{P}^{\circ,a})$ is continuously absorbed at level $a$, the pssMp process $Y^{(a)}$ is continuously absorbed at the origin and hence, not surprisingly,  $\Phi^\circ(0)=0$.

 % In terms of the subordinator $X$, the law $\mathbb{P}^\uparrow$ corresponds to the stable subordinator  conditioned to remain in the in interval $[0,a)$ and $\mathbb{P}^{\circ,y}$ corresponds to the stable process conditioned to hit the point $y$.

  \section{Last passage by time \texorpdfstring{$a$}{a} for a Markov process }

  In this section we consider the following problem. Let $X=(X_t:t \geq 0)$ be a Markov process and $a>0$, then what does the process $X$ conditioned  to not visit $0$ after time $a$ look like? The motivation for the problem and connection with the first half of the paper comes from the following. Suppose that $Y=(Y_t:t \geq 0)$ is a subordinator which is not a pure drift and for $x \geq 0$ define
  \[
    D_x:=Y_{\tau^+_x}-x
  \]
  where $\tau^+_x:=\inf\{t\geq 0: Y_t>x\}$. Since $Y$ has the strong Markov property, it follows that $D=(D_x:x \geq 0)$ is a Markov process with the property that the closure of its zero set coincides with that of the image of $Y.$ %$D_x=0$ if and only if the process $Y$ hits the point $x$.
 Hence it follows that conditioning the process $Y$ to stay in the interval $[0,a]$, as in the previous sections, is equivalent to conditioning the Markov process $D$ to not hit $0$ after time $a$. In this section we would like to extend this notion to more general Markov processes. Although the results are stated for Markov processes living on $\R$, they can be easily adapted to more general Polish spaces. Before doing so we first introduce some definitions and recall some useful facts.

We will assume that $X$ is a nice Markov process on $\R$ in the sense of Chapter IV in \cite{bertoin}. Denote $T_0:= \inf \{t \geq 0: X_t=0\}$ and suppose that
 \[
   \P_x(T_0<\infty)>0,\qquad x\in \R.
 \]
Let $L=(L_t:t \geq 0)$ be the local time of $X$ at $0$. In particular, $L$ is the unique process which increases on the set $\{s:X_s=0\},$ and hence there exists a $\beta \geq 0$ such that
 \begin{equation}\label{eq:local_def}
   \beta L_t= \int_0^t \1_{\{X_s=0\}}\, {\rm d}s,\qquad t\geq 0.
 \end{equation}
 For more on the existence and construction of  the local time process, see \cite[Section IV]{bertoin}. % for example. 
 Next, for $q\geq 0$, define
 \begin{equation}\label{eq:V_def}
   V^{(q)}_{s,t}(x):= \E_x\left[\int_s^t e^{-qu} {\rm d}L_u\right].
 \end{equation}
When $q=0$ the super-script in $V^{(q)}$ will be omitted for notational convenience.

 Next let $\mathcal E^*$ denote the  excursion set, that is the set of c\`adl\`ag paths $\epsilon: [0,\zeta] \rightarrow \R$ such that $\epsilon(t)\neq 0$ if and only if $t \in (0,\zeta)$ for some $\zeta=\zeta(\epsilon)>0$. There exists a  $\sigma$-finite measure $\n$ on $\mathcal E^*$ which is induced by the process $X$, known as the excursion measure, it allows to describe the excursions of $X$ from $0$ as follows, see e.g. Section 4 in Chapter IV, \cite{bertoin} for further background. Consider the set $U=[0,\infty)\backslash\overline{\{t:X_t= 0\}}$. Since this is an open set, it can be written as a countable union of disjoint intervals $\{(\ell_i,r_i)\}_{i \geq 1}$. Next for any $i \in \N$,
  \[
    \epsilon_i(t):=\begin{cases}
    X_{\ell_i+t} &\text{ if } 0<t \leq r_i -\ell_{i}\\
    0 & \text{ if }t > r_i-\ell_{i}.
    \end{cases}
  \]
  Notice that the Stieltjes measure ${\rm d}L_t$ is well defined because the process $L$ is non-decreasing. A key result in excursion theory states that
  \[
  \sum_{i\geq 1} \delta_{(\ell_i,\epsilon_i)}
  \]
  is a Poisson point process on $[0,\infty)\times \mathcal E^*$ with intensity given by $\E[{\rm d}L_t] \otimes \n$, see for example \cite[IV Theorem 10]{bertoin}. One consequence of this is the compensation formula which states the following. For $t \geq 0$, let $\mathbb D[0,t]$ be the space of c\`adl\`ag paths $\omega:[0,t]\rightarrow \R$. Consider a function $F=(F_u:u \geq 0)$ such that
  $F_u:\mathbb D[0,u] \times\mathcal E^* \rightarrow \R$ for which $u \mapsto F_u(\cdot, \epsilon)$ is adapted with respect to the filtration $\mathcal F$ and is left-continuous, for every $\epsilon \in \mathcal E^*$. Then
  \begin{equation}\label{eq:master_formula}
    \E\left[\sum_{i=1}^\infty F_{\ell_i}((X_t:t \leq \ell_i),\epsilon_{i})\right] = \E\left[\int_0^\infty \n(F_u((\epsilon_t:t \leq u),\epsilon)) {\rm d}L_u \right].
  \end{equation}
 Furthermore, under $\n$ the process $\epsilon$ has the Markov property with the transition semigroup of $X$ killed at its first hitting time of $0.$

 For $x \in \R$ and $q>0$ define
 \begin{equation}\label{eq:h_q}
   h_q(x):= \frac{\P_x(T_0> \e_q)}{q\beta +\n(1-e^{-q\zeta})}
 \end{equation}
 where $\e_q$ is an independent exponential with parameter $q$. In order to state our main theorem we must first make two assumptions:
  \begin{itemize}
    \item[(A)] for each $x \in \R$, $h(x):=\lim_{q \downarrow 0}h_{q}(x)$ exists,
    \item[(B)] either there exists a measurable function $H$ such that $|h_{q}(x)|\leq H(x),$ $x\in \R$ and $\sup_{t\geq 0}\n(H(\epsilon_t), t<\zeta)<\infty$, or that the mapping $q\mapsto h_{q}(x)$ is monotone for all $x\in \R$.
  \end{itemize}

  Now we can state the main theorem of this section.

  \begin{theorem}\label{thm:markov_limit}
    Let $a>0,$ and for $t \geq 0$, let $g_t:= \sup\{s \leq t: X_s=0\}$. Assume that (A) and (B) hold. Then for each $x \in \R$ there exists a measure $\P^{\leftarrow a}_x$ such that for any stopping time $T$ and $A \in \mathcal F_T$,
    \[
      \P^{\leftarrow a}_x(A; T< \zeta) = \lim_{q \downarrow 0}\P_x(A; T< \e_q | g_{\e_q} < a).
    \]
  \end{theorem}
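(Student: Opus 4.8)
The plan is to follow the template of the proof of Theorem~\ref{thm:limiti_eq}: write the conditional probability as the ratio
\[
\P_x(A; T<\e_q \mid g_{\e_q}<a) = \frac{\P_x(A; T<\e_q; g_{\e_q}<a)}{\P_x(g_{\e_q}<a)},
\]
compute numerator and denominator as explicit $\P_x$-expectations, and then let $q\downarrow 0$. Since the denominator is exactly the numerator evaluated at $A=\mathbb D$ and $T=0$, it suffices to find a workable expression for $\P_x(A; T<\e_q; g_{\e_q}<a)$ for a general stopping time $T$ and $A\in\mathcal F_T$, and to specialise at the end. Throughout I abbreviate $c_q:=q\beta+\n(1-e^{-q\zeta})$, so that the definition \eqref{eq:h_q} reads $\P_y(T_0>\e_q)=c_q\,h_q(y)$.

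For the denominator I would first observe that $\{g_{\e_q}<a\}$ is, on $\{\e_q\ge a\}$, precisely the event that $X$ has no zero in $[a,\e_q]$, while $\{\e_q<a\}$ lies in $\{g_{\e_q}<a\}$ automatically. Splitting on the independent clock and applying the Markov property at the deterministic time $a$ gives
\[
\P_x(g_{\e_q}<a) = (1-e^{-qa}) + e^{-qa}\,\E_x\!\left[\1_{\{X_a\neq 0\}}\,\P_{X_a}(T_0>\e_q)\right] = (1-e^{-qa}) + e^{-qa}c_q\,\E_x\!\left[\1_{\{X_a\neq 0\}}h_q(X_a)\right],
\]
where I used the lack of memory of $\e_q$ to replace the residual clock after $a$ by an independent copy, and then \eqref{eq:h_q}.

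The numerator is the substantive step. Applying the Markov property at $T$ (again using lack of memory, so that on $\{T<\e_q\}$ the residual clock $\e_q-T$ is an independent copy of $\e_q$), I would decompose $\{g_{\e_q}<a\}$ according to whether the last zero before $\e_q$ occurs before or after $T$; this is governed by the excursion of $X$ straddling $T$, and it forces a split into $\{T<a\}$ and $\{T\ge a\}$. On $\{T\ge a\}$ the last zero must already lie in $[0,a)$, so the event reduces to $\{g_T<a\}\in\mathcal F_T$ together with the post-$T$ path avoiding $0$ up to the residual clock, contributing $c_q\,\E_x[\1_A\1_{\{T\ge a,\,g_T<a\}}e^{-qT}h_q(X_T)]$. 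On $\{T<a\}$ the two possibilities recombine into $\P_{X_T}(g_{\e_q}<a-T)$, to which the denominator identity applies verbatim with starting point $X_T$ and level $a-T$; after the tower property this yields
\[
\E_x\!\left[\1_A\1_{\{T<a\}}(e^{-qT}-e^{-qa})\right] + c_q\,e^{-qa}\,\E_x\!\left[\1_A\1_{\{T<a\}}\1_{\{X_a\neq 0\}}h_q(X_a)\right].
\]
Summing the three pieces expresses the numerator as a combination of terms of order $q$ and terms of order $c_q$.

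Finally I would let $q\downarrow 0$. Assumption (A) supplies the pointwise limit $h_q\to h$, while assumption (B) is exactly the hypothesis that licenses passing this limit inside the expectations (and inside the excursion-measure integrals produced by the straddling decomposition), via domination by $H$ or via monotonicity. Dividing numerator and denominator by the appropriate gauge and using $(e^{-qT}-e^{-qa})/q\to a-T$, the ratio converges, and the limit takes the form of a Doob $h$-transform built from $h$ and the last-zero functional $g$; declaring it to be $\P^{\leftarrow a}_x(A;T<\zeta)$ proves the theorem. The main obstacle is the numerator: resolving $\{g_{\e_q}<a\}$ through the excursion straddling $T$ when $T$ may fall on either side of $a$, and then justifying the interchange of $\lim_{q\downarrow 0}$ with the expectations, which is precisely where (A) and (B) enter. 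A secondary care point is tracking the relative orders of the $O(q)$ (clock-rings-before-$a$) and $O(c_q)$ (avoid-$0$-after-$a$) contributions, so that the normalisation is chosen correctly and the limiting measure is well defined and consistent across stopping times.
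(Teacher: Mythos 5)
Your pre-limit identity is correct, and it is obtained by a genuinely different decomposition from the paper's: the paper never splits at the deterministic time $a$, but instead applies the excursion compensation formula \eqref{eq:master_formula} at the last zero $g_{\e_q}$ (Lemma~\ref{lemma:condition_q}), which factorises the conditioned law into a local-time integral $\E_x[\int_0^a{\rm d}L_t\,e^{-qt}(\cdots)]$ times the excursion quantity $\n\bigl(\cdots(e^{-qr}-e^{-q\zeta});r<\zeta\bigr)/\bigl(q\beta+\n(1-e^{-q\zeta})\bigr)$, and in particular yields $\P_x(g_{\e_q}<a)=V^{(q)}_{0,a}(x)\bigl(q\beta+\n(1-e^{-q\zeta})\bigr)$ rather than your time-$a$ Markov-property expression. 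The genuine gap in your proposal is the limiting step. Assumption (B) is shaped precisely for the paper's factorisation: after the Markov property under $\n$, the relevant quantity is $\n\bigl(G(\epsilon_s:s\leq t)h_q(\epsilon_t);t<\zeta\bigr)$, which is automatically bounded by $\|G\|_\infty$ uniformly in $q$, because $\n\bigl(h_q(\epsilon_t);t<\zeta\bigr)=\n\bigl(e^{-qt}-e^{-q\zeta};t<\zeta\bigr)/\bigl(q\beta+\n(1-e^{-q\zeta})\bigr)\leq 1$; assumption (B) then licenses $h_q\to h$ inside $\n$ (Lemma~\ref{lemma:conv_q_ex}). Your formula instead requires passing the limit inside $\P_x$-expectations such as $\E_x[\1_{\{X_a\neq0\}}h_q(X_a)]$ and $\E_x[\1_A\1_{\{T\geq a,\,g_T<a\}}e^{-qT}h_q(X_T)]$, and (B) gives you neither the required domination (integrability of $H(X_a)$ under $\P_x$ is not implied by $\sup_{t}\n(H(\epsilon_t),t<\zeta)<\infty$) nor, in the monotone case, finiteness of the increasing limits, without which your ratio is $\infty/\infty$. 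To control these expectations one decomposes them at the last zero before $a$ (respectively before $T$) --- i.e.\ one runs the compensation formula after all --- and even then the piece carried by $\{T_0>a\}$, paths which have not yet visited $0$, is governed by the initial segment of the path rather than by excursions from $0$, so it is not touched by (B). As written, the interchange of limit and expectation is therefore unjustified, and repairing it essentially forces you back onto the paper's excursion-theoretic route.

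A second, smaller but real, gap is the existence of the measure. The theorem asserts that a single measure $\P^{\leftarrow a}_x$ realises all the limits; your limit produces, for each stopping time $T$, a set function on $\mathcal F_T$ whose weight is not $\mathcal F_T$-measurable (it involves $h(X_a)\1_{\{T<a\}}$), and its algebraic form moreover changes according to whether $q/\bigl(q\beta+\n(1-e^{-q\zeta})\bigr)$ tends to $(\beta+\n(\zeta))^{-1}>0$ or to $0$, so the claimed ``Doob $h$-transform'' structure is not apparent and consistency across stopping times is not addressed. The paper obtains existence, consistency and the Markov property simultaneously from the product representation of Theorem~\ref{thm:markov_limit_desc}: an $h$-transform by the space-time excessive function $(t,x)\mapsto V_{t,a}(x)$ up to $g_\infty$, concatenated with the entrance law $\n\bigl(\cdot\,h(\epsilon_u)\bigr)$ afterwards. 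This is exactly the structural payoff of the factorised pre-limit formula that your time-$a$ decomposition bypasses.
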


  The next theorem describes the path of the process $(X,\P^{\leftarrow a}_x)$.

  \begin{theorem}\label{thm:markov_limit_desc}
    Assume that (A) and (B) hold. For $a>0$ and $x\in \R$ the measure $\P^{\leftarrow a}_x$ admits the representation
\begin{equation*}
\begin{split}
&\E^{\leftarrow a}_{x}\left[F(X_{s}, s< g_{\infty})f(g_{\infty})G(X_{v+g_{\infty}}, v\leq u)\right]\\
&=\frac{1}{V_{0,a}(x)}\E_{x}\left[\int^{a}_{0}{\rm d}L_{t}F(X_{s}, s<t) f(t)\n\left(G(\epsilon_{s}, s\leq u)h(\epsilon_{u}), 0<u<\zeta \right)\right],
\end{split}
\end{equation*}
for any $F,G:\mathbb{D}\to \re$ bounded, measurable functionals, and $f:[0,\infty)\to[0,\infty)$ measurable.
\end{theorem}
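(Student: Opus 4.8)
The plan is to evaluate the (already constructed) measure $\P^{\leftarrow a}_x$ on the last--passage functional directly, by reproducing the conditioning limit behind Theorem~\ref{thm:markov_limit} for this particular functional. Write $\Psi=F(X_s,s<g_\infty)f(g_\infty)G(X_{v+g_\infty},v\le u)$. Since $g_\infty$ is a last--exit (co-optional) time rather than a stopping time, $\Psi$ cannot be fed into Theorem~\ref{thm:markov_limit} as it stands; instead I would work with the approximating quantities $\E_x[\Psi_q\mid g_{\e_q}<a]$, where $\Psi_q$ is the $\F_{\e_q}$-measurable surrogate obtained by replacing $g_\infty$ with $g_{\e_q}$ --- the last zero before $\e_q$, equivalently the left end-point of the excursion straddling $\e_q$ --- and by restricting attention to $\{g_{\e_q}+u<\e_q\}$, an event of asymptotically full conditional probability. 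A monotone--class argument, using that under the conditioning $g_{\e_q}\to g_\infty<a$ and the straddling excursion converges to the final, escaping excursion as $q\downarrow0$, identifies $\lim_{q\downarrow0}\E_x[\Psi_q\mid g_{\e_q}<a]$ with $\E^{\leftarrow a}_x[\Psi]$.

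For the denominator $D_q:=\P_x(g_{\e_q}<a)$ I would split according to whether $\e_q$ lands at a zero of $X$ or inside an excursion. The contribution of the time spent at $0$ before $a$ is $q\beta V^{(q)}_{0,a}(x)$ by \eqref{eq:local_def}, while the contribution of the excursion straddling $\e_q$ with left end-point before $a$ is, after integrating the straddle against the exponential clock and applying the master formula \eqref{eq:master_formula}, equal to $\E_x[\int_0^a e^{-qt}\,{\rm d}L_t]\,\n(1-e^{-q\zeta})=V^{(q)}_{0,a}(x)\,\n(1-e^{-q\zeta})$. Hence $D_q=V^{(q)}_{0,a}(x)\big(q\beta+\n(1-e^{-q\zeta})\big)$, which is reassuringly the normalising constant appearing in \eqref{eq:h_q}.

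For the numerator $N_q:=\E_x[\Psi_q\1_{\{g_{\e_q}<a\}}]$ I would decompose at $g_{\e_q}$: the pre-$g$ data produce $f(g_{\e_q})F(X_s,s<g_{\e_q})$, and the straddling excursion --- observed up to time $u$ and required to survive past $\e_q$ --- produces $G$ evaluated on that excursion. Applying \eqref{eq:master_formula} and integrating the straddle against the exponential clock gives
\[
N_q=\E_x\Big[\int_0^a e^{-qt}f(t)F(X_s,s<t)\,{\rm d}L_t\Big]\cdot\n\big(G(\epsilon_v,v\le u)\1_{\{u<\zeta\}}(e^{-qu}-e^{-q\zeta})\big).
\]
The key algebraic step is then to rewrite the excursion factor. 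Using the Markov property of $\epsilon$ under $\n$ at time $u$ (with the semigroup of $X$ killed at $T_0$), one has $e^{-qu}-e^{-q\zeta}=e^{-qu}(1-e^{-q(\zeta-u)})$, whose $\n$-integral against $G(\epsilon_v,v\le u)\1_{\{u<\zeta\}}$ equals $\n\big(G(\epsilon_v,v\le u)\1_{\{u<\zeta\}}\,e^{-qu}\P_{\epsilon_u}(T_0>\e_q)\big)$. By the definition \eqref{eq:h_q} of $h_q$ this is $\big(q\beta+\n(1-e^{-q\zeta})\big)\,\n\big(G(\epsilon_v,v\le u)\1_{\{u<\zeta\}}e^{-qu}h_q(\epsilon_u)\big)$, and the factor $q\beta+\n(1-e^{-q\zeta})$ cancels against the one in $D_q$.

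What remains is to let $q\downarrow0$ in $N_q/D_q$: here $V^{(q)}_{0,a}\to V_{0,a}$ and the $e^{-qt}$-weighted local-time integral converges by monotone (or dominated) convergence, $e^{-qu}\to1$, and $h_q(\epsilon_u)\to h(\epsilon_u)$ by assumption (A), after which rearranging the constant-in-$t$ excursion factor inside the local-time integral yields precisely the stated representation with normalisation $V_{0,a}(x)$. The hard part will be two-fold: first, justifying the interchange of the limit with the excursion measure in $\n\big(G(\epsilon_v,v\le u)e^{-qu}h_q(\epsilon_u)\big)$, which is exactly what assumption (B) (domination by $H$, or monotonicity of $q\mapsto h_q$) is designed to provide; and second, the reduction step in the first paragraph, namely checking that the last--passage decomposition is genuinely compatible with the conditioning limit of Theorem~\ref{thm:markov_limit}. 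The cancellation of $q\beta+\n(1-e^{-q\zeta})$ is the structural mechanism that renders the limit finite and nondegenerate.
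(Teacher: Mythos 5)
Your proposal is correct and follows essentially the same route as the paper: your numerator/denominator computation via the compensation formula (splitting $\P_x(g_{\e_q}<a)$ according to whether $X_{\e_q}=0$) is exactly Lemma~\ref{lemma:condition_q}, and your rewriting of the excursion factor as $(q\beta+\n(1-e^{-q\zeta}))\,\n\big(G\,e^{-qu}h_q(\epsilon_u);u<\zeta\big)$ via the Markov property under $\n$, followed by the $q\downarrow0$ limit under (A) and (B), is exactly Lemma~\ref{lemma:conv_q_ex}. The only cosmetic difference is that the paper constructs $\P^{\leftarrow a}_x$ through this very limit (proving Theorems~\ref{thm:markov_limit} and~\ref{thm:markov_limit_desc} simultaneously), so the ``reduction step'' you flag as a remaining difficulty is handled there by definition rather than by a separate compatibility argument.
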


The description in Theorem \ref{thm:markov_limit_desc} immediately allows us to decompose the path of $(X,\P^{\leftarrow a}_x)$ as follows.

  \begin{corollary}\label{cor:1}
    Under $\P^{\leftarrow a}_x$, $g_\infty=\sup\{t \geq 0: X_t=0\}<\infty$ almost surely, and
    \[
      \P^{\leftarrow a}_x(g_\infty > t)=\frac{V_{t,a}(x)}{V_{0,a}(x)} \qquad t \leq a.
    \]
The process $(\P^{\leftarrow a}, X)$ is obtained as the concatenation of two independent Markov processes $(X_t: t \leq g_\infty)$ and $(X_t: t \geq g_\infty)$. Let $f:\R\rightarrow \R$ be a bounded measurable function. Then under $\P^{\leftarrow a}$ the process $(X_t: t \leq g_\infty)$ is inhomogeneous and its transition probabilities are determined by
\begin{equation}\label{120}
      \E^{\leftarrow a}_{x}[f(X_{t})|\mathcal F_s; t<g_\infty]=\frac{\E_{X_{s}}[f(X_{t-s})V_{0,a-t-s}(X_{t-s})]}{V_{t,a-s}(X_s)} \qquad s \leq t \leq a.
 \end{equation}
    The latter process, $(X_t: t \geq g_\infty),$ has entrance law
    \[
      \E^{\leftarrow a}_x[f(X_{t+g_\infty})]=\n(f(\epsilon_t)h(\epsilon_t):t< \zeta),
    \]
    and semi-group given by Doob's h-transform
    $$\frac{1}{h(x)}\E_{x}\left(f(X_{t})h(X_{t}), t<T_{0}\right),\qquad \text{ for all }x\text{ with }h(x) \neq 0.$$
  \end{corollary}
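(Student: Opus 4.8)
The plan is to derive every assertion directly from the master representation of Theorem~\ref{thm:markov_limit_desc} by specialising the test functionals $F,f,G$ and the truncation time $u$. The structural fact that makes this work is that the representation \emph{factorises}: the quantity $\n(G(\epsilon_s,s\leq u)h(\epsilon_u),\,u<\zeta)$ is a constant, independent of both $t$ and the trajectory of $X$, so it can be pulled outside the local-time integral to give
\[
\E^{\leftarrow a}_x\!\left[F(X_s,s<g_\infty)f(g_\infty)G(X_{v+g_\infty},v\leq u)\right]
=\frac{\n(G(\epsilon_s,s\leq u)h(\epsilon_u),u<\zeta)}{V_{0,a}(x)}\,\E_x\!\left[\int_0^a{\rm d}L_t\,F(X_s,s<t)f(t)\right].
\]
All the independence statements will come out of this product form, and the two semigroups from two applications of the Markov property.

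First I would pin down the law of $g_\infty$. Setting $F\equiv G\equiv 1$ makes the left-hand side independent of $u$; since the right-hand side must agree for every $u$, this forces $\n(h(\epsilon_u),u<\zeta)$ to be constant in $u$. Taking moreover $f\equiv1$ and using that $\P^{\leftarrow a}_x$ is a probability measure (Theorem~\ref{thm:markov_limit}) identifies this constant as $c:=\n(h(\epsilon_u),u<\zeta)=1$. Then $f=\1_{(t,a]}$ yields $\P^{\leftarrow a}_x(g_\infty>t)=V_{t,a}(x)/V_{0,a}(x)$ for $t\leq a$, and evaluating at $t=a$ gives $V_{a,a}(x)=0$, hence $g_\infty\leq a<\infty$ almost surely.

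The product form above shows at once that the joint law splits into a factor depending only on $(F,f)$, i.e.\ on the pre-$g_\infty$ path and on $g_\infty$, times the factor $\n(G(\epsilon_s,s\leq u)h(\epsilon_u),u<\zeta)$ depending only on $G$, i.e.\ on the post-$g_\infty$ path; this is exactly the claimed concatenation into two independent pieces. Choosing $G=\tilde f(\epsilon_u)$ and using $V_{0,a}(x)^{-1}\E_x[\int_0^a{\rm d}L_t]=1$ gives the entrance law $\E^{\leftarrow a}_x[\tilde f(X_{u+g_\infty})]=\n(\tilde f(\epsilon_u)h(\epsilon_u),u<\zeta)$. To read off the semigroup I would take $G(\epsilon_s,s\leq u)=g_1(\epsilon_{u_1})g_2(\epsilon_u)$ with $u_1<u$ and invoke the Markov property of $\epsilon$ under $\n$ (with killed semigroup $\E_\cdot[\,\cdot\,;\,\cdot<T_0]$), writing
\[
\n\!\left(g_1(\epsilon_{u_1})g_2(\epsilon_u)h(\epsilon_u),u<\zeta\right)
=\n\!\left(g_1(\epsilon_{u_1})h(\epsilon_{u_1})\tfrac{1}{h(\epsilon_{u_1})}\E_{\epsilon_{u_1}}\!\left[g_2(X_{u-u_1})h(X_{u-u_1});u-u_1<T_0\right],u_1<\zeta\right),
\]
which exhibits the one-step kernel as the Doob $h$-transform $h(y)^{-1}\E_y[f(X_t)h(X_t);t<T_0]$ acting after the stated entrance law.

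It remains to obtain the inhomogeneous pre-$g_\infty$ transitions \eqref{120}. Taking $f=\1_{(t,a]}$ with $F$ a functional of the path up to time $t$ and applying the Markov property of $X$ to $\int_t^a{\rm d}L_r$ shows that, on $\{t<g_\infty\}$,
\[
\frac{{\rm d}\P^{\leftarrow a}_x}{{\rm d}\P_x}\Big|_{\F_t}=\frac{V_{0,a-t}(X_t)}{V_{0,a}(x)},
\]
so the pre-$g_\infty$ process is the space--time Doob transform of $X$ by the \emph{excessive} function $(t,y)\mapsto V_{0,a-t}(y)$. The point I would stress is that this weight is a genuine supermartingale, not a martingale: its $\P_x$-mean $V_{t,a}(x)$ decreases in $t$, and the lost mass is precisely $\P^{\leftarrow a}_x(g_\infty\in{\rm d}t)$, so the transform is sub-Markovian and its killing realises $g_\infty$. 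Formula~\eqref{120} is then the transition kernel of this transform, obtained by the Markov property and dividing by the normaliser $\P^{\leftarrow a}_x(t<g_\infty\mid\F_s)$, itself an expected-local-time functional of $V$. The genuine work of the corollary lies here, in correctly threading the $V$-indices through the time shifts and recognising the excessive (rather than harmonic) nature of the pre-$g_\infty$ weight; everything else follows formally from the product structure and the two Markov properties.
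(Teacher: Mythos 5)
Your overall route is the paper's own: read everything off the factorised representation of Theorem \ref{thm:markov_limit_desc}, treat the pre-$g_\infty$ part via the Markov property of $X$ and the space--time excessive function built from $V$, and the post-$g_\infty$ part via the Markov property of $\epsilon$ under $\n$ together with the Doob $h$-transform. However, one step is genuinely false: the claim that taking $F\equiv G\equiv 1$ makes the left-hand side independent of $u$, so that $c:=\n(h(\epsilon_u),u<\zeta)$ is constant in $u$ and equal to $1$. By your own entrance-law computation, $\n(h(\epsilon_u),u<\zeta)$ is the probability that the post-$g_\infty$ process is still alive at time $u$, and since its semigroup is the $h$-transform of a merely \emph{excessive} function (Lemma \ref{lemma:h_q_excessive}), this mass is non-increasing, not constant: by the Markov property under $\n$,
\[
\n\left(h(\epsilon_{u+s}),\,u+s<\zeta\right)=\n\left(\E_{\epsilon_u}\left[h(X_s);\,s<T_0\right],\,u<\zeta\right)\leq \n\left(h(\epsilon_u),\,u<\zeta\right),
\]
with strict inequality whenever $h$ is strictly excessive. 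By \eqref{16} and Remark \ref{AB} this is exactly what happens when $0$ is positive recurrent: there $h(x)=\E_x(T_0)/\n(\zeta)$, so $\n(h(\epsilon_u),u<\zeta)=\n((\zeta-u)^+)/\n(\zeta)<1$ for $u>0$, reflecting the fact (noted in the paper) that the conditioned process is killed in finite time. Your conclusion $c=1$ would make the post-$g_\infty$ process conservative, contradicting the sub-Markovian $h$-transform semigroup in the very corollary you are proving. The root of the error is a literal reading of Theorem \ref{thm:markov_limit_desc}: its left-hand side carries the implicit restriction $\{u<\zeta-g_\infty\}$, inherited from the event $\{r<\e_q-g_{\e_q}\}$ in Lemma \ref{lemma:condition_q}; with $G\equiv 1$ the left-hand side therefore does depend on $u$, and no constancy can be extracted.

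This matters because the normalisation $c=1$ is what your derivation of the law of $g_\infty$ (hence of $g_\infty\leq a$ almost surely, and of the normaliser behind \eqref{120}) rests on. The repair is what the paper actually does: let $r\downarrow 0$ in Lemma \ref{lemma:condition_q} first and then $q\to 0$, obtaining the marginal identity for the pre-$g_\infty$ part
\[
\P^{\leftarrow a}_x\left(G(X_u,\, u\leq t),\; t<g_\infty\right)=\E_x\left[\int_0^a{\rm d}L_v\,\frac{G(X_u,\, u\leq t)}{V_{0,a}(x)}\,\1_{\{t<v\}}\right],
\]
which contains no excursion factor at all and is automatically normalised because Lemma \ref{lemma:condition_q} is a genuine conditional expectation. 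The Markov property at time $t$ then turns the right-hand side into the space--time transform you describe, and the law of $g_\infty$ and formula \eqref{120} follow exactly as you indicate. Your treatment of the post-$g_\infty$ part stands as well, since it never needed $c=1$: the identity $V_{0,a}(x)^{-1}\E_x\left[\int_0^a{\rm d}L_t\right]=1$ holds trivially, and the entrance law plus the Markov property of $\epsilon$ under $\n$ give the $h$-transform semigroup just as in the paper.
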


  We shall spend the remainder of this section proving Theorem \ref{thm:markov_limit}, Theorem \ref{thm:markov_limit_desc} and Corollary~\ref{cor:1}. Our proof is similar to the proof of the conditioning we have seen in the previous part. We again use a technique similar to \cite{CD}, \cite{panti}, and \cite{rivero2005} .

   We will henceforth assume that the assumptions (A) and (B) hold. We begin with the following lemma.

  \begin{lemma}\label{lemma:h_q_excessive}
    We have that the function $h_q$, defined in \eqref{eq:h_q}, is excessive in the sense that
    \[
      \E_x[h_q(X_t); t < T_0] \leq h_q(x) \qquad t \geq 0.
    \]
  The same holds for the function $h$.
     Furthermore,
    \begin{equation}\label{16}
      \E_{x}\left(h_{q}(X_{t}); t<T_0\right)=e^{qt}h_{q}(x)-\frac{q}{q\beta+\n(\zeta>\e_q)}\int^t_0\P_x(T_0>u)e^{-qu}{\rm d}u.
    \end{equation}
  \end{lemma}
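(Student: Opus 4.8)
The plan is to reduce everything to the numerator $m_q(x):=\P_x(T_0>\e_q)$, since the denominator $c_q:=q\beta+\n(1-e^{-q\zeta})$ is a strictly positive constant depending only on $q$ (not on $x$ or $t$); consequently $h_q=m_q/c_q$ is excessive if and only if $m_q$ is, and the constant simply factors through identity \eqref{16}. The natural starting point is the integral representation $m_q(x)=q\int_0^\infty e^{-qs}\,\P_x(T_0>s)\,{\rm d}s$, obtained by conditioning on the value of $\e_q$ and using $\P(\e_q>s)=e^{-qs}$.

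First I would apply the Markov property at time $t$. Writing $\theta_t$ for the shift operator and noting that on $\{t<T_0\}$ one has $T_0=t+T_0\circ\theta_t$, the event $\{t<T_0,\,T_0\circ\theta_t>s\}$ coincides with $\{T_0>t+s\}$, so that $\E_x(\P_{X_t}(T_0>s);\,t<T_0)=\P_x(T_0>t+s)$. Integrating this against $q\,e^{-qs}\,{\rm d}s$ and applying Tonelli gives
\[
\E_x\big(m_q(X_t);\,t<T_0\big)=q\int_0^\infty e^{-qs}\,\P_x(T_0>t+s)\,{\rm d}s .
\]
Since $s\mapsto\P_x(T_0>s)$ is non-increasing, the integrand is dominated by $q\,e^{-qs}\,\P_x(T_0>s)$, so the right-hand side is at most $m_q(x)$; dividing by $c_q$ yields the excessivity inequality $\E_x(h_q(X_t);t<T_0)\le h_q(x)$. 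The explicit identity \eqref{16} then follows by the change of variables $v=t+s$ and splitting the resulting integral over $[0,\infty)$ into its contributions over $[0,t]$ and $[t,\infty)$, the latter being a multiple of $m_q(x)$ after dividing by $c_q$.

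For the function $h$ I would pass to the limit $q\downarrow0$ in the inequality $\E_x(h_q(X_t);t<T_0)\le h_q(x)$. By assumption (A) the right-hand side converges to $h(x)$ and $h_q(X_t)\to h(X_t)$ pointwise; assumption (B) is exactly what licenses moving the limit inside the expectation on the left — monotone convergence when $q\mapsto h_q(x)$ is monotone, and dominated convergence via the domination $|h_q|\le H$ otherwise — giving $\E_x(h(X_t);t<T_0)\le h(x)$. I expect this interchange of limit and expectation to be the only delicate point. Note that as $q\downarrow0$ both $m_q(x)$ and $c_q$ tend to $0$ in the recurrent case, so one cannot take limits termwise inside the integral representation and must instead argue at the level of the already-integrated inequality; this is precisely the reason hypotheses (A) and (B) are imposed.
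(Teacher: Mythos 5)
Your route---reduce everything to the numerator $\P_x(T_0>\e_q)$, integrate out the exponential time, and apply the Markov property at time $t$---is exactly the argument the paper intends: its own proof is the one-line remark that the lemma ``follows essentially from the Markov property'', with details deferred to \cite{panti}. Your derivation of the inequality $\E_x[h_q(X_t);t<T_0]\le h_q(x)$ is correct. The trouble is your claim that the change of variables ``gives \eqref{16}''. Carrying the substitution $v=t+s$ out in full yields
\[
\E_x\bigl[h_q(X_t);t<T_0\bigr]
=\frac{qe^{qt}}{q\beta+\n(1-e^{-q\zeta})}\int_t^\infty e^{-qv}\,\P_x(T_0>v)\,{\rm d}v
=e^{qt}h_q(x)-\frac{qe^{qt}}{q\beta+\n(1-e^{-q\zeta})}\int_0^t e^{-qu}\,\P_x(T_0>u)\,{\rm d}u,
\]
where the factor $e^{qt}$ multiplies \emph{both} terms. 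This is not \eqref{16} as printed: there the integral term carries no $e^{qt}$, and the two right-hand sides differ by $(e^{qt}-1)\,\frac{q}{q\beta+\n(1-e^{-q\zeta})}\int_0^t e^{-qu}\P_x(T_0>u)\,{\rm d}u$, which is strictly positive whenever $t>0$ and $\P_x(T_0>0)>0$. Your formula is in fact the correct one---equivalently, $\E_x[e^{-qt}h_q(X_t);t<T_0]=h_q(x)-\frac{q}{q\beta+\n(\zeta>\e_q)}\int_0^t e^{-qu}\P_x(T_0>u)\,{\rm d}u$---so \eqref{16} as displayed evidently carries a typographical slip: the $e^{qt}$ should distribute over both terms, or else the left-hand side should be discounted by $e^{-qt}$. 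The mathematical content of your proposal is thus sound, but as a proof of the stated identity it is not finished: you assert agreement with a display that your own algebra contradicts, which suggests the computation was never actually carried through. You should write out the identity you really obtain and flag the discrepancy explicitly.

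A second, smaller gap concerns the deduction for $h$. Dominated convergence under $\E_x[\,\cdot\,;t<T_0]$ requires a dominating function that is integrable with respect to $\P_x$, but the first alternative of (B) controls $H$ only under the excursion measure: $\sup_{t\ge0}\n(H(\epsilon_t),t<\zeta)<\infty$ gives no bound whatsoever on $\E_x[H(X_t);t<T_0]$ (that hypothesis is tailored to Lemma \ref{lemma:conv_q_ex}, where the relevant expectation is taken under $\n$). The fix costs nothing and needs neither half of (B): since $h_q\ge0$ and $h_q\to h$ pointwise by (A), Fatou's lemma applied to the inequality $\E_x[h_q(X_t);t<T_0]\le h_q(x)$ already gives $\E_x[h(X_t);t<T_0]\le\liminf_{q\downarrow0}h_q(x)=h(x)$, which is all the lemma asserts about $h$.
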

  The lemma follows essentially from the Markov property, see for example page 22 in \cite{panti} for further details.

  Next we decompose the process $(\P_x,X)$ into two processes; one process describes its law until time $g_{\e_q}$ and the other describes its law after time $g_{\e_q}$. The formula \eqref{eq:master_formula}  enables us to decompose the path of $(X_t: t \leq \e_q)$, conditionally on $g_{\e_q} < a$.

  \begin{lemma}\label{lemma:condition_q}
    Suppose that $F,G: \mathbb D \rightarrow \R$ are bounded measurable functionals and $f:\R\rightarrow \R$ is bounded and measurable. Then for every $r>0$,
    \begin{align}
        \E_x&[F(X_s: s \leq g_{\e_q})f(g_{\e_q})G(X_{s+g_{\e_q}}:s \leq r); r< \e_q-g_{\e_q}|g_{\e_q}<a]\nonumber\\
        &=\E_x\left[\int_0^a{\rm d}L_t e^{-qt}\frac{F(X_s:s \leq t)}{V^{(q)}_{0,a}(x)} f(t)\frac{\n(G(\epsilon_s:s \leq r)(e^{-qr}-e^{-q\zeta}); r<\zeta)}{q\beta + \n(1-e^{-q\zeta})}\right].\label{eq:markov_finite_q}
    \end{align}
  \end{lemma}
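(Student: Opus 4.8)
The plan is to realize the left-hand side as a conditional expectation and then apply the compensation formula \eqref{eq:master_formula} to the path up to the left endpoint of the excursion straddling time $\e_q$. First I would observe that, on the event $g_{\e_q}<a$, the time $g_{\e_q}$ is the left endpoint $\ell_i$ of exactly one of the excursion intervals $(\ell_i,r_i)$ in the decomposition of the complement of the zero set, namely the excursion that straddles $\e_q$, i.e. the unique $i$ with $\ell_i\le\e_q<r_i$. The functional $F(X_s:s\le g_{\e_q})$ depends only on the path up to $\ell_i$, while $f(g_{\e_q})=f(\ell_i)$ records the location of the endpoint, and $G(X_{s+g_{\e_q}}:s\le r)$ together with the event $\{r<\e_q-g_{\e_q}\}$ depends on the straddling excursion $\epsilon_i$. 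The key point is that $\{r<\e_q-\ell_i\}=\{r<\e_q-g_{\e_q}\}$ requires the independent exponential clock $\e_q$ to survive at least $r$ units into the excursion, so conditioning on $\mathcal F_{\ell_i}$ and on the excursion produces a factor $e^{-q\ell_i}$ from surviving up to $\ell_i$ and then, integrated against the excursion, the factor $(e^{-qr}-e^{-q\zeta})$ from requiring $\e_q$ to land inside the straddling excursion beyond time $r$ (the clock must exceed $\ell_i+r$ but the excursion must still be in progress, i.e. $\e_q-\ell_i<\zeta$).

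Concretely, I would write the numerator $\E_x[F\, f\, G;\,r<\e_q-g_{\e_q};\,g_{\e_q}<a]$ as a sum over excursions $\sum_i \E_x[\cdots]$, recognizing it as an instance of the left-hand side of \eqref{eq:master_formula} with the functional
\[
F_u\big((X_t:t\le u),\epsilon\big)=e^{-qu}\1_{\{u<a\}}F(X_s:s\le u)\,f(u)\,G(\epsilon_s:s\le r)\big(e^{-qr}-e^{-q\zeta(\epsilon)}\big)\1_{\{r<\zeta(\epsilon)\}},
\]
where the $e^{-qu}$ accounts for $\{u<\e_q\}$ and the bracket $(e^{-qr}-e^{-q\zeta})$ accounts for the event that the independent clock $\e_q$ falls in the straddling excursion at a time exceeding $\ell_i+r$; the left-continuity and adaptedness needed to apply \eqref{eq:master_formula} hold since $F$ depends on the pre-$\ell_i$ path and the excursion contributions factor through $\n$. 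Applying the compensation formula converts this sum into
\[
\E_x\!\left[\int_0^a {\rm d}L_t\, e^{-qt}F(X_s:s\le t)\,f(t)\,\n\!\big(G(\epsilon_s:s\le r)(e^{-qr}-e^{-q\zeta});\,r<\zeta\big)\right].
\]

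Finally I would compute the conditioning denominator $\P_x(g_{\e_q}<a)$. Taking $F\equiv G\equiv f\equiv 1$ and $r=0$ in the identity just derived gives $\P_x(g_{\e_q}<a)=\E_x[\int_0^a {\rm d}L_t\,e^{-qt}]\cdot\n(1-e^{-q\zeta})=V^{(q)}_{0,a}(x)\big(q\beta+\n(1-e^{-q\zeta})\big)$, where the factor $q\beta$ arises from the continuous part of $L$ contributing to $\{X_s=0\}$ time via \eqref{eq:local_def} and $\n(1-e^{-q\zeta})$ is the total excursion-measure mass weighted by the probability the clock falls in an excursion; indeed $V^{(q)}_{0,a}(x)=\E_x[\int_0^a e^{-qt}{\rm d}L_t]$ by \eqref{eq:V_def}. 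Dividing through then produces exactly the stated right-hand side \eqref{eq:markov_finite_q}. The main obstacle I anticipate is the bookkeeping of the independent exponential clock against the excursion structure — in particular justifying that requiring $\{r<\e_q-g_{\e_q}\}$ while $g_{\e_q}<a$ yields precisely the weight $e^{-qt}(e^{-qr}-e^{-q\zeta})$, and carefully handling the $q\beta$ term coming from the possibly nontrivial sojourn of $X$ at $0$, so that the normalizing constant matches $q\beta+\n(1-e^{-q\zeta})$ as in the definition \eqref{eq:h_q} of $h_q$.
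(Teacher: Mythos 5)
Your handling of the numerator is correct and is essentially the paper's own argument. The paper conditions on the value $v$ of the exponential clock, applies the compensation formula \eqref{eq:master_formula} to the functional $F^{(v)}_u(\omega,\epsilon)=\1_{\{r+u<v<\zeta+u\}}\1_{\{u<a\}}f(u)F(\omega_s:s\leq u)G(\epsilon_s:s\leq r)$, and then integrates against $qe^{-qv}\,{\rm d}v$; you integrate the clock out first, so that the weight $e^{-qu}(e^{-qr}-e^{-q\zeta})$ already sits inside the functional fed to \eqref{eq:master_formula}. By Fubini these are the same computation, and both yield
\[
\E_x\left[\int_0^a {\rm d}L_u\, e^{-qu}F(X_s:s\leq u)f(u)\right]\n\left(G(\epsilon_s:s\leq r)(e^{-qr}-e^{-q\zeta});\, r<\zeta\right).
\]

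The gap is in your identification of the normalising constant $\P_x(g_{\e_q}<a)$. Taking $F\equiv G\equiv f\equiv 1$ and $r\downarrow 0$ in the identity just derived does \emph{not} give $\P_x(g_{\e_q}<a)$: as $r\downarrow 0$ the events $\{r<\e_q-g_{\e_q}\}$ increase only to $\{\e_q>g_{\e_q}\}$, which up to null sets is $\{X_{\e_q}\neq 0\}$, so this step yields only
\[
\P_x(g_{\e_q}<a,\,X_{\e_q}\neq 0)=V^{(q)}_{0,a}(x)\,\n(1-e^{-q\zeta}).
\]
Indeed, your displayed chain of equalities is self-contradictory whenever $\beta>0$: since $\E_x[\int_0^a e^{-qt}{\rm d}L_t]=V^{(q)}_{0,a}(x)$ by \eqref{eq:V_def}, your middle expression equals $V^{(q)}_{0,a}(x)\,\n(1-e^{-q\zeta})$, not $V^{(q)}_{0,a}(x)(q\beta+\n(1-e^{-q\zeta}))$. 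The missing contribution is the event that the clock rings while $X$ sits at the origin, which has positive probability exactly when $\beta>0$ and requires a separate argument; this is what the paper supplies. On $\{X_{\e_q}=0\}$ one has $g_{\e_q}=\e_q$, hence
\[
\P_x(g_{\e_q}<a,\,X_{\e_q}=0)=\P_x(\e_q<a,\,X_{\e_q}=0)=\int_0^a qe^{-qt}\,\P_x(X_t=0)\,{\rm d}t=q\beta\,\E_x\left[\int_0^a e^{-qt}\,{\rm d}L_t\right]=q\beta\,V^{(q)}_{0,a}(x),
\]
where the third equality uses \eqref{eq:local_def} and Fubini. Summing the two pieces gives the denominator $V^{(q)}_{0,a}(x)(q\beta+\n(1-e^{-q\zeta}))$ appearing in \eqref{eq:markov_finite_q}. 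You correctly anticipated that ``handling the $q\beta$ term'' would be the delicate point, but as written your argument asserts the conclusion without carrying out this step, so the proof is incomplete precisely there.
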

  \begin{proof}
    Fix $r>0$. For $v \geq 0$, consider the following functional
    \[
    F^{(v)}_u(\omega,\epsilon)= \1_{\{r+u<v<\zeta + u\}}\1_{\{u<a\}}f(u) F(\omega_s: s \leq u)G(\epsilon_s:s \leq r).
    \]
    Then we have that
    \begin{align*}
      \E_x&[F(X_s: s \leq g_{\e_q})f(g_{\e_q})G(X_{s+g_{\e_q}}:s \leq r); r< \e_q-g_{\e_q};g_{\e_q}<a]\\
      &=\int_0^\infty\E_x\left[\sum_{i=1}^\infty F^{(v)}_{\ell_i}((X_t:t \leq \ell_i),\epsilon_i)\right] qe^{-qv}\,{\rm d}v.
    \end{align*}
    It may be the case that $X_{\e_q}=0$ (when $\beta >0$), in which case $g_{\e_q}=\e_q$ and the above expectation is zero.
    Using \eqref{eq:master_formula} we get that
    \begin{align*}
      \E_x&[F(X_s: s \leq g_{\e_q})f(g_{\e_q})G(X_{s+g_{\e_q}}:s \leq r); r< \e_q-g_{\e_q};g_{\e_q}<a]\\
      &=\E_x \left[\int_{0}^{a}{\rm d}L_u F(X_s: s \leq u)f(u)\int_{\mathcal E^*}\n({\rm d}\epsilon; r <\zeta ) \int_{u+r}^{u+\zeta} {\rm d}v q e^{-qv}G(\epsilon_s:s \leq r)\right] \\
      &= \E_x \left[\int_{0}^{a}{\rm d}L_u e^{-qu}F(X_s: s \leq u)f(u)\int_{\mathcal E^*}\n({\rm d}\epsilon; r <\zeta ) e^{-qr}\int_{0}^{\zeta-r} {\rm d}v q e^{-qv}G(\epsilon_s:s \leq r)\right] \\
      &=\E_x \left[\int_{0}^{a}{\rm d}L_u e^{-qu}F(X_s: s \leq u)f(u)\right] \n(G(\epsilon_s:s \leq r)(e^{-qr}-e^{-q\zeta}); r<\zeta).
    \end{align*}
    Hence we are left to show that $\P(g_{\e_q} <a) = V^{(q)}_{0,a}(x)(q\beta + \n(1-e^{-q\zeta}))$.

    Taking $F=1$, $G=1$, $f=1$ and $r \downarrow 0$ in the equation above gives that
    \[
    \P_x(g_{\e_q}<a; X_{\e_q}\neq 0)= \E_x\left[\int_0^a {\rm d}L_u e^{-qu}\right]\n(1-e^{-q\zeta})=V^{(q)}_{0,a}(x)\n(1-e^{-q\zeta})
    \]
    where in the second equality we have used \eqref{eq:V_def}. Now it remains to show that $\P_x(g_{\e_q}<a; X_{\e_q}= 0)=V^{(q)}_{0,a}(x)q\beta$. Notice that $X_{\e_q}=0$ occurs if and only if $g_{\e_q}=\e_q$, hence we get that
    \begin{align*}
    \P_x(g_{\e_q}<a; X_{\e_q}= 0)&=\P_x(\e_q<a; X_{\e_q}=0)\\
    &=\int_0^a qe^{-qt} \P_x(X_t=0)\, {\rm d}t\\
    &=q\beta\E_x \left(\int_0^a e^{-qt}\, {\rm d}L_t \right)\\
    &=q\beta V^{(q)}_{0,a}(x)
    \end{align*}
    where in the second equality we have used \eqref{eq:local_def} and in the final equality we have again used \eqref{eq:V_def}. This concludes the proof.
  \end{proof}

Notice that Theorem \ref{thm:markov_limit} %Lemma \ref{lemma:condition_q} 
  immediately implies that if $\P^{\leftarrow a}_x$ exists, then $(X,\P^{\leftarrow a}_x)$ is a Markov process.

To prove the convergence in Theorem \ref{thm:markov_limit} notice that the factor on the left of \eqref{eq:markov_finite_q} converges to the desired limit given in Theorem \ref{thm:markov_limit_desc} as $q \downarrow 0$. The Markov property of the process $(X_{t}: t\leq g_{\infty})$ under $\P^{\leftarrow a}$ is easily deduced from equation \eqref{eq:markov_finite_q}. Indeed, we infer that for any $s<t<a$, and functionals $f:\mathbb{R}\to [0,\infty)$ and $G: \mathbb{D}[0,s] \rightarrow \R$ measurable and positive, we have the identity
\begin{equation*}
\begin{split}
\P^{\leftarrow a}_x\left(G(X_{u}, u\leq t), t<g_{\infty}\right)&=\E_x\left[\int_0^a{\rm d}L_v \frac{G(X_u: u \leq t)}{V_{0,a}(x)} 1_{\{t<v\}}\right]\\
&=\E_x\left[\frac{G(X_u: u \leq t)}{V_{0,a}(x)} \int_{t}^a{\rm d}L_v \right]\\
&=\E_x\left[ G(X_u: u \leq t)\frac{V_{t,a}(X_{t})}{V_{0,a}(x)}\right],
\end{split}
\end{equation*}
where in the first equality we applied the expression resulting from taking the limit as $q\to 0$ in \eqref{eq:markov_finite_q},   then the second identity follows from the fact that the only term influenced by the integral with respect to the local time is the indicator function, and finally the third equality follows from the Markov property at time $t.$ From the latter identity we infer that the law of $(X_{u}: u\leq g_{\infty})$ under $\P^{\leftarrow a}$ is that of the $h$-transform of $X,$ killed at time $a,$ using the space-time excessive function $(x,t)\mapsto V_{t,a}(x)$ and hence its semigroup is given by \eqref{120}.
  
Now it remains to describe the second term in the product in \eqref{eq:markov_finite_q}.
  \begin{lemma}\label{lemma:conv_q_ex}
    We have that for all $t \geq 0$ and $G: \mathbb{D}[0,t] \rightarrow \R$ continuous and bounded,
    \[
    \lim_{q\downarrow 0}\frac{\n(G(\epsilon_s:s \leq t)(e^{-qt}-e^{-q\zeta}); t<\zeta)}{q\beta + \n(1-e^{-q\zeta})} = \n(G(\epsilon_s:s \leq t)h(\epsilon_t); t<\zeta)
    \]
  \end{lemma}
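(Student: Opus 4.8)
The plan is to collapse the left-hand ratio into a single expression tested against $\n$, to which assumptions (A) and (B) can be applied. Since $t$ is fixed, $e^{-qt}$ is a deterministic constant that pulls out of $\n$, and I would write $e^{-qt}-e^{-q\zeta}=e^{-qt}(1-e^{-q(\zeta-t)})$. On the event $\{t<\zeta\}$ I would then invoke the Markov property of $\epsilon$ under $\n$ at time $t$ (recorded in the excerpt: conditionally on $\mathcal F_t\cap\{t<\zeta\}$, the shifted excursion $(\epsilon_{t+s}:s\geq0)$ evolves as $X$ under $\P_{\epsilon_t}$ killed at $T_0$). Under this identification the residual lifetime $\zeta-t$ has, conditionally, the law of $T_0$ started from $\epsilon_t$, so that
\[
\n\bigl(G(\epsilon_s:s\leq t)(e^{-qt}-e^{-q\zeta});\,t<\zeta\bigr)
= e^{-qt}\,\n\Bigl(G(\epsilon_s:s\leq t)\,\E_{\epsilon_t}\bigl[1-e^{-qT_0}\bigr];\,t<\zeta\Bigr).
\]

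The next step identifies the inner expectation with $h_q$. Because $\P(\e_q>s)=e^{-qs}$, we have $\P_y(T_0>\e_q)=\E_y[1-e^{-qT_0}]$ for every $y$ (with the convention $e^{-q\cdot\infty}:=0$), and hence, by the definition \eqref{eq:h_q},
\[
\E_{\epsilon_t}\bigl[1-e^{-qT_0}\bigr]=\P_{\epsilon_t}(T_0>\e_q)=\bigl(q\beta+\n(1-e^{-q\zeta})\bigr)\,h_q(\epsilon_t).
\]
The factor $q\beta+\n(1-e^{-q\zeta})$ is precisely the denominator appearing on the left-hand side of the lemma, so it cancels, and I am left with the clean identity
\[
\frac{\n\bigl(G(\epsilon_s:s\leq t)(e^{-qt}-e^{-q\zeta});\,t<\zeta\bigr)}{q\beta+\n(1-e^{-q\zeta})}
= e^{-qt}\,\n\bigl(G(\epsilon_s:s\leq t)\,h_q(\epsilon_t);\,t<\zeta\bigr).
\]

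It then remains to let $q\downarrow0$. The prefactor satisfies $e^{-qt}\to1$, and by assumption (A) the integrand converges pointwise $\n$-a.e.\ to $G(\epsilon_s:s\leq t)h(\epsilon_t)\1_{\{t<\zeta\}}$. The one genuine difficulty, and the main obstacle, is that $\n$ is only $\sigma$-finite, so pointwise convergence alone does not license passing the limit through the integral; this is exactly what assumption (B) is designed to supply. If the dominating function $H$ exists, then $0\leq h_q\leq H$ yields $|G(\epsilon_s:s\leq t)h_q(\epsilon_t)|\leq \|G\|_\infty H(\epsilon_t)$ with $\n(H(\epsilon_t);\,t<\zeta)\leq\sup_{u\geq0}\n(H(\epsilon_u);\,u<\zeta)<\infty$, so dominated convergence applies. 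If instead $q\mapsto h_q(\epsilon_t)$ is monotone, I would reduce to $G\geq0$ by splitting $G=G^+-G^-$ and apply monotone convergence; here finiteness of the relevant integrals is guaranteed because taking $G\equiv1$ in the identity just derived shows $\n(h_q(\epsilon_t);\,t<\zeta)<\infty$ (the numerator being bounded by $\n(1-e^{-q\zeta})<\infty$). Either way the limit equals $\n(G(\epsilon_s:s\leq t)h(\epsilon_t);\,t<\zeta)$, which is the assertion. I expect the Markov-property rewriting and the cancellation of the denominator to be structural and routine once the residual-lifetime identification under $\n$ is stated carefully; the entire weight of the argument sits in the interchange of limit and $\n$.
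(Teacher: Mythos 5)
Your proposal is correct and follows essentially the same route as the paper: rewrite $e^{-qt}-e^{-q\zeta}=e^{-qt}(1-e^{-q(\zeta-t)})$, apply the Markov property under $\n$ at time $t$ to identify the residual lifetime with $T_0$ under $\P_{\epsilon_t}$, recognise $h_q(\epsilon_t)$ via $h_q(x)=\E_x[1-e^{-qT_0}]/(q\beta+\n(1-e^{-q\zeta}))$, and then pass to the limit using (A) and (B) with dominated or monotone convergence. If anything, you are slightly more careful than the paper, which silently drops the harmless factor $e^{-qt}$ and does not spell out the finiteness needed in the monotone case.
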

  \begin{proof}
    Recall the definition of $h_q(x)$ in \eqref{eq:h_q}. Integrating out the exponential results in the following,
    \[
    h_q(x) = \frac{\E_x[1-e^{-qT_0}]}{q\beta + \n(1-e^{-q\zeta})}
    \]
    where, as before, $T_0=\inf\{s \geq 0: X_s=0\}$. Hence we have that using the Markov property,
    \begin{align*}
    \frac{\n(G(\epsilon_s:s \leq t)(e^{-qt}-e^{-q\zeta}); t<\zeta)}{q\beta + \n(1-e^{-q\zeta})}&=\frac{\n(G(\epsilon_s:s \leq t)e^{-qt}\E_{\epsilon_t}[1-e^{-qT_0}]; t<\zeta)}{q\beta + \n(1-e^{-q\zeta})}\\
    &=\n(G(\epsilon_s:s \leq t) h_q(\epsilon_t);t < \zeta).
    \end{align*}
    Assumptions (A) and (B) together now imply the lemma either through the dominated convergence theorem or the monotone convergence theorem.
  \end{proof}

  Now Theorem \ref{thm:markov_limit}, Theorem \ref{thm:markov_limit_desc} and Corollary~\ref{cor:1} follow from Lemma \ref{lemma:condition_q} and Lemma \ref{lemma:conv_q_ex} together with the Markov property of $\epsilon$ under $\n$, see e.g. \cite{blumenthal}.

  \begin{remark}
    We believe that the assumptions (A) and (B) are minimal conditions for Theorem \ref{thm:markov_limit} and Theorem \ref{thm:markov_limit_desc} to hold. These assumptions can be easily verified in the case when $X$ is transient because in that case $h(x)=\frac{\P_{x}(T_{0}=\infty)}{\n(\zeta=\infty)}$; when $X$ is a L\'evy process, see e.g. \cite{CD} and \cite{panti}, or when $X$ is a positive self-similar Markov process \cite{rivero2005}.  See the Remark~\ref{AB} below for further details.
  \end{remark}

  \begin{remark}\label{AB}
    To finish let us observe that verifying (A) and (B) is not necessarily a hard task.  Notice that we have the identity
      $$\frac{1}{q\beta+\n(\zeta>\ee_{q})}=\E\left(\int_{[0,\infty)}{\rm d}L_{s}e^{-qs}\right).$$
    If we denote ${\mathcal{V}}({\rm d}s)=\E({\rm d}L_{s}),$ we can then express the function $h_{q}$ as
    $$h_{q}(x)=q\int^{\infty}_{0}{\rm d}te^{-qt }\int^{t}_{0}{\mathcal{V}}({\rm d}s)\P_{x}(T_{r}>t-s),\qquad q>0, x\in E.$$ From this fact it can be seen that if the function $t\mapsto \int^{t}_{0}{\mathcal{V}}({\rm d}s)\P_{x}(T_{r}>t-s)$ is differentiable on $(0,\infty)$ then the function $h_{q}$ is non-increasing in $q.$ As with respect to (A), from this identity we can see that for instance if $0$ is positive recurrent, $\n(\zeta)<\infty,$ and $\E_{x}(T_{0})<\infty$ for all $x\in E,$ then the renewal theorem implies that
    $$
    \int^{t}_{0}{\mathcal{V}}({\rm d}s)\P_{x}(T_{r}>t-s)\xrightarrow[t\to\infty]{}\frac{1}{\n(\zeta)}\E_{x}(T_{r})=h(x),\qquad x\in E.
    $$

    Notice that identity \eqref{16} implies that in this case $h$ is strictly excessive when $0$ is positive recurrent.
    This makes sense since when the origin is positive recurrent, conditioning on avoiding the origin is costly and results in the process being killed in finite time.
  In the null-recurrent case, $\n(\zeta)=\infty$ and the condition (A) holds under the assumption that the tail distribution of $T_{0}$ is regularly varying. This time~\eqref{16} shows that the function $h$ is invariant and so the process conditioned to avoid zero has an infinite lifetime.
  \end{remark}

  \subsection{L\'evy processes}\label{victor}
    Using the previous methods we aim at building a L\'evy process $(X_{t}, t\geq 0)$ conditioned not to go above level $b$ and its maximum is achieved before time $a.$ We refer to Chapter VII in \cite{kyprianou} for background on fluctuation theory of L\'evy processes. In order to avoid some technicalities we will make the additional assumption that
    \[
    0 \text{ is regular for $(0,\infty)$ and $(-\infty,0)$.}
    \]
We will denote by $\widehat{X}$ the dual L\'evy process $\widehat{X}=-X,$ and by $\widehat{\P}$ its law, that is the push forward measure of the mapping $\widehat{X}$ under $\P.$ As usual, $\widehat{\P}_{x}$ denotes the law of the dual process started from $x$, that is the law of $x+\widehat{X}$ under $\widehat{\P}$.

    Let $S_{t}=\sup_{s\leq t}\{X_{s}\vee 0\}$, $t\geq 0$. The process $X$ reflected in its past supremum $(S_{t}-X_{t}: t\geq 0)$, is a strong Markov process with respect to the natural filtration $(\mathcal F_t: t \geq 0)$ generated by $X$.
    Similar to the previous section $(S_t-X_{t}: t\geq 0)$ admit a local time at $0$ and we denote this by  $\overline L = (\overline{L}_{t}, t\geq 0)$. The process $\overline L$ admits a right-inverse and we denote it by $\overline L^{-1}$. Finally we let $\overline{n}$ denote the excursion measure at $0$ for  $(S_t-X_{t}: t\geq 0)$. We will denote by $\underline{n}$ the excursion measure for the dual process $\widehat{X}$ reflected in its past supremum.

    Next let $V({\rm d}s,dx)$ denote renewal measure of the upward ladder process $(\overline{L}^{-1}, X_{\overline{L}^{-1}})$ given by
$$\iint_{[0,\infty)^2}V({\rm d}s,{\rm d}x)g(s,x)=\E\left(\int^{\infty}_{0}{\rm d}u1_{\{\overline{L}^{-1}_{u}\in {\rm d}s, X_{\overline{L}^{-1}_{u}}\in {\rm d}x\}}1_{\{\overline{L}^{-}_{u}<\infty\}}\right).$$ Using that $\overline{L}$ increases only at the times where $X$ reaches its supremum and making a change of variables, we infer that for any  $g:[0,\infty)^2\to[0,\infty)$ measurable, we have the identity
\begin{equation}\label{eq:22}
\iint_{[0,\infty)^2}V({\rm d}s,{\rm d}x)g(s,x)=\E\left(\int^{\infty}_{0}{\rm d}\overline{L}_s g(s,X_s)\right).
\end{equation}
 From Lemma~1 in \cite{chaumont-supremum} we obtain also  that
    \begin{equation}\label{chaumont:1}
      V({\rm d}s,{\rm d}x)=\underline{n}(\epsilon_{s}\in {\rm d}x, s<\zeta){\rm d}s.
    \end{equation}  For $0<a, b\leq \infty$, we define
    $$V_{q}([0,a)\times[0,b))=\int_{[0,a)\times[0,b)}e^{-qs}V({\rm d}s,{\rm d}x)=\int^{a}_{0}{\rm d}s e^{-qs}\underline{n}(\epsilon_{s}<b, s<\zeta),$$ and the
  upward renewal function $$V(x)=\E\left(\int^{\infty}_{0}{\rm d}\overline{L}_{s}1_{\{X_{s}\leq x\}}\right).$$
%  Similar objects are defined for the downward ladder process, they will be decorated with a hat, $\widehat{\,}$ ,  for instance $\widehat{V}(x)=\E\left(\int^{\infty}_{0}d\underline{L}_{s}1_{\{-X_{s}\leq x\}}\right),$ $x\geq 0,$ will denote the downward renewal function.
  Following \cite{CD}, we will denote by $\widehat{\P}^{\uparrow}_{x}$ the law of the dual process $\widehat{X}$ conditioned to stay positive, started from $x\geq 0.$ This measure satisfies that for any $t\geq 0,$
  \begin{equation*}
  {\rm d}\widehat{\P}^{\uparrow}_{x}|_{\mathcal{F}_{t}}=\begin{cases}
  \frac{V(X_{t})}{V(x)}\mathbf{1}_{(t<\tau^{-}_{0})}{\rm d}\widehat{\P}_{x}|_{\mathcal{F}_{t}}, &\text{if } x>0,\\
  V(X_{t})\mathbf{1}_{(t<\tau^{-}_{0})}{\rm d}\overline{n}|_{\mathcal{F}_{t}} &\text{if } x=0.
  \end{cases}
  \end{equation*}
For convenience,   we write $\widehat{\P}^{\uparrow}$ in place of $\widehat{\P}^{\uparrow}_{0}.$ Here we will denote
$$g_{t}=\sup\{s<t: S_{s}-X_{s}=0\},\qquad t\geq 0;$$ which is consistent with the notation in the previous section as it is the last visit to zero before time $t$ for the process reflected at the supremum.

%  Here, for $t\geq 0,$ $k_{t}$ is the killing operator defined by $k_{t}\omega(s)=\omega(s)$ if $s\leq t,$ and $k_{t}\omega(s)=\Delta$ if $s> t.$ It satisfies that $\zeta\circ k_{t}=t\wedge{\zeta}$.

  \begin{theorem}
      We have the following limit
      \begin{equation}\label{condtostaybelow}
        \begin{split}
          &\lim_{q\to 0}\E\left(F(X_{s}, 0\leq s< g_{\e_{q}})f(g_{\e_q}, S_{\e_q}) G(X_{g_{\e_q}}-X_{u+g_{\e_q}}, 0\leq u\leq T-g_{\e_{q}} ) | g_{\e_q}\leq a, S_{\e_q}\leq b\right)\\
          &=\frac{\E\left(\int^{a}_{0}{\rm d}\overline{L}_{t}F(X_{s}, 0\leq s< t)f(t, S_{t})\1_{\{S_{t}\leq b\}}\right)}{V([0,a]\times[0,b])}\times  \widehat{\E}^{\uparrow}\left(G(X_{u}, 0\leq u\leq T)\right),
          %\left[\frac{\overline{a}}{\overline{a}+\overline{n}(\zeta)}G(0)+\frac{1}{1+\frac{\overline{a}}{\overline{n}(\zeta)}}\overline{n}\left(G(X_{u}, u\leq T)V(X_{T}), T<\zeta\right)\right]
        \end{split}
      \end{equation}
      for every $T>0,$ $F,G:\mathbb{D}\to \re$, $f:\re\to\re,$ bounded measurable functionals. The left factor of the above equation corresponds to the law of the L\'evy process killed at the last time where it hits its overall supremum, conditioned to have an overall supremum reached by time $a$ and whose value is below $b.$
    \end{theorem}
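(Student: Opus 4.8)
The plan is to mirror the proof of Theorem~\ref{thm:markov_limit} and Theorem~\ref{thm:markov_limit_desc}, replacing the excursion theory of $X$ away from $0$ by that of the reflected process $R_t:=S_t-X_t$. Note first that $R_t=\widehat X_t-\inf_{s\le t}\widehat X_s$, so $R$ is a strong Markov process whose local time at $0$ is $\overline L$ and whose excursion measure is $\overline n$; its excursions are exactly the excursions of $X$ below its running supremum, equivalently the excursions of $\widehat X$ above its running infimum, killed on first return to $0$ (i.e.\ at $\tau^-_0$). Under this identification $g_{\e_q}$ is the left endpoint of the excursion of $R$ straddling $\e_q$, the quantity $S_{\e_q}=X_{g_{\e_q}}$ is the value of the ascending ladder height at that excursion, and the post-$g_{\e_q}$ segment satisfies $X_{g_{\e_q}}-X_{u+g_{\e_q}}=\epsilon_u$, where $\epsilon$ is the straddling excursion. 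I shall condition on $\{g_{\e_q}\le a,\,S_{\e_q}\le b\}$ and observe $G$ over the window $[0,T]$ of this excursion (implicitly on $\{T<\tau^-_0\}$), so that the computation runs in close parallel with Lemma~\ref{lemma:condition_q}.

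First I would establish the finite-$q$ analogue of \eqref{eq:markov_finite_q}. Introduce the functional
\[
F^{(v)}_u\big((X_t:t\le u),\epsilon\big)=\1_{\{u\le a\}}\1_{\{S_u\le b\}}\1_{\{u+T<v<u+\zeta(\epsilon)\}}F(X_s:s<u)\,f(u,S_u)\,G(\epsilon_s:s\le T),
\]
integrate the exponential time $\e_q=v$ against $q\,{\rm e}^{-qv}{\rm d}v$, and apply the compensation formula \eqref{eq:master_formula} for $R$. The inner $v$-integral gives $\int_{u+T}^{u+\zeta}q\,{\rm e}^{-qv}{\rm d}v={\rm e}^{-qu}({\rm e}^{-qT}-{\rm e}^{-q\zeta})$, and since the resulting excursion factor does not depend on $u$ it factorises out, yielding
\begin{equation*}
\begin{split}
&\E\big[F(X_s:s<g_{\e_q})f(g_{\e_q},S_{\e_q})G(X_{g_{\e_q}}-X_{u+g_{\e_q}}:u\le T)\,;\,T<\e_q-g_{\e_q},\,g_{\e_q}\le a,\,S_{\e_q}\le b\big]\\
&=\overline n\big(G(\epsilon_s:s\le T)({\rm e}^{-qT}-{\rm e}^{-q\zeta})\,;\,T<\zeta\big)\,\E\Big[\int_0^a{\rm d}\overline L_u\,{\rm e}^{-qu}F(X_s:s<u)f(u,S_u)\1_{\{S_u\le b\}}\Big].
\end{split}
\end{equation*}
Taking $F=f=G=1$ and $T\downarrow0$ (and, when $\overline L^{-1}$ carries a drift $\overline\beta>0$, adding the boundary contribution of $\{X_{\e_q}=S_{\e_q}\}$ exactly as the $q\beta$ term in Lemma~\ref{lemma:condition_q}) and using \eqref{eq:22} to identify $\E[\int_0^a{\rm d}\overline L_u\,{\rm e}^{-qu}\1_{\{S_u\le b\}}]=V_q([0,a)\times[0,b))$, I obtain the normaliser
\[
\P\big(g_{\e_q}\le a,\,S_{\e_q}\le b\big)=V_q([0,a)\times[0,b))\big(q\overline\beta+\overline n(1-{\rm e}^{-q\zeta})\big).
\]

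Dividing, the conditional expectation splits as a product of two ratios. The ladder ratio $\E[\int_0^a{\rm d}\overline L_u\,{\rm e}^{-qu}Ff\1_{\{S\le b\}}]/V_q([0,a)\times[0,b))$ converges, by dominated convergence (${\rm e}^{-qu}\to1$) and \eqref{eq:22}, to $\E[\int_0^a{\rm d}\overline L_u\,F(X_s:s<u)f(u,S_u)\1_{\{S_u\le b\}}]/V([0,a]\times[0,b])$, the left factor of the claimed limit. For the excursion ratio, the Markov property of $\epsilon$ under $\overline n$ at time $T$ gives ${\rm e}^{-qT}-{\rm e}^{-q\zeta}={\rm e}^{-qT}\E_{\epsilon_T}[1-{\rm e}^{-q\tau^-_0}]$, so that, writing $V_q(y):=\E_y[1-{\rm e}^{-q\tau^-_0}]/(q\overline\beta+\overline n(1-{\rm e}^{-q\zeta}))$ for the pointwise renewal approximant,
\[
\frac{\overline n\big(G(\epsilon_s:s\le T)({\rm e}^{-qT}-{\rm e}^{-q\zeta});T<\zeta\big)}{q\overline\beta+\overline n(1-{\rm e}^{-q\zeta})}=\overline n\big(G(\epsilon_s:s\le T)\,{\rm e}^{-qT}V_q(\epsilon_T)\,;\,T<\zeta\big).
\]
Since $V_q\to V$, the renewal function of the ascending ladder height (this is the L\'evy instance of assumptions (A) and (B), which hold here), the right-hand side converges to $\overline n(G(\epsilon_s:s\le T)V(\epsilon_T);T<\zeta)$, which by the very definition of $\widehat\P^\uparrow$ for $x=0$, namely ${\rm d}\widehat\P^\uparrow|_{\mathcal F_t}=V(X_t)\1_{(t<\tau^-_0)}{\rm d}\overline n|_{\mathcal F_t}$, equals $\widehat\E^\uparrow(G(X_u:u\le T))$. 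Multiplying the two limits yields \eqref{condtostaybelow}.

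The main obstacle is the excursion ratio: justifying $V_q\to V$ together with the interchange of the limit and the $\overline n$-integration. I expect to control this by a dominated (or monotone) convergence argument under $\overline n$ --- the analogue of condition (B) and of Lemma~\ref{lemma:conv_q_ex} --- and to identify the limit with the ascending-ladder renewal function $V$ via its renewal structure and Chaumont's identity \eqref{chaumont:1}. Along the way I must check that $\overline n(1-{\rm e}^{-q\zeta})$ and $V_q([0,a)\times[0,b))$ are strictly positive and finite so that all ratios are well defined, and treat with care the boundary term $\{X_{\e_q}=S_{\e_q}\}$, which contributes only when $\overline L^{-1}$ has a strictly positive drift and is handled exactly as the $q\beta$ term in Lemma~\ref{lemma:condition_q}.
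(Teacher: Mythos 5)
Your proposal is correct and takes essentially the same route as the paper's own proof: the compensation formula for the reflected process $S-X$ gives the finite-$q$ factorisation, the ladder factor converges by monotone convergence, and the excursion factor is treated via the Markov property under $\overline{n}$ at time $T$ together with the identification $h_{q}(x)=V_{q}((0,\infty)\times[0,x])\uparrow V(x)$ (the step you flag as the main obstacle, which the paper settles by citing \cite{CD}) and the definition of $\widehat{\P}^{\uparrow}$ at $x=0$. The only cosmetic differences are that you work with the fixed window $[0,T]$ from the outset, which lets you dispense with the paper's closing Kolmogorov-consistency argument, and that you carry a drift term $q\overline{\beta}$ which the paper notes is zero under the standing assumption that $0$ is regular for both half-lines.
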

    \begin{proof}
      The following identity is obtained by now standard calculations using the compensation formula for the process $X$ reflected in its past supremum, see e.g. \cite{C} for similar computations,
      \begin{equation*}
        \begin{split}
          &\E\left(F(X_{s}, s< g_{\e_q})f(g_{\e_q},S_{\e_q})G(X_{g_{\e_q}}-X_{u+g_{\e_q}}, 0\leq u\leq \e_{q}-g_{\e_{q}})\right)\\
          &=\E\left(q\int^{\infty}_{0}{\rm d}te^{-qt}F(X_{s}, 0\leq s< t)G(\mathbf{0})f(t, S_{t})1_{\{X_{t}=S_{t}\}}\right)\\
          & +q\E\left(\sum_{t>0}1_{\{d_{t}>g_{t}\}}F(X_{s}, 0\leq s< g_{t})f(g_{t}, S_{g_{t}})\left(\int^{d_{t}}_{g_{t}}{\rm d}se^{-qs}G(X_{g_{t}}-X_{u+g_{t}}, 0\leq u\leq s-g_{t})\right)\right)\\
          &=\kappa(q,0)\E\left(\int^{\infty}_{0}{\rm d}\overline{L}_{t}e^{-q t}F(X_{s}, 0\leq s< t)f(t, S_{t})\right)\\
          &\qquad \times\left[\frac{q}{\kappa(q,0)} \left[\overline{a}G(\mathbf{0})+\overline{n}\left(\int^{\zeta}_{0}{\rm d}se^{-qs}G(\epsilon_{u}, 0\leq u\leq s )\right)\right]\right];
        \end{split}
      \end{equation*}
%there $\theta$ denotes the usual shift operator,
where $\mathbf{0}$ denotes the path that is equal to zero everywhere, and the coefficient $\overline{a}$ corresponds to the drift of the inverse local time at the supremum, which is zero because $X$ is assumed to be regular downwards. From this formula we deduce that for any $f$
      \begin{equation}\label{eq::18}
        \begin{split}
          &\E\left(F(X_{s}, 0\leq s< g_{\e_{q}})f(g_{\e_q}, S_{\e_q}) G(X_{g_{\e_q}}-X_{u+g_{\e_q}}, 0\leq u<\e_{q}-g_{\e_{q}}) | g_{\e_q}\leq a, S_{\e_q}\leq b\right)\\
          &=\frac{\E\left(\int^{a}_{0}{\rm d}\overline{L}_{t}e^{-q t}F(X_{s}, 0\leq s<t)f(t, S_{t})\1_{\{S_{t}\leq b\}}\right)}{V_{q}([0,a]\times[0,b])}\\
          &\qquad \times \left[\frac{q}{\kappa(q,0)}\overline{n}\left(\int^{\zeta}_{0}{\rm d}se^{-qs}G(\epsilon_{u}, 0\leq u<s)\right)\right]
        \end{split}
      \end{equation}
      We would like to determine the limit as $q\to 0$ of the above expressions. The monotone convergence theorem implies that the following limit holds
      \begin{equation*}%\label{eq::171}
        \begin{split}
         &\lim_{q\to 0}\frac{\E\left(\int^{a}_{0}{\rm d}\overline{L}_{t}e^{-q t}F(X_{s}, 0\leq s<t )f(t,S_{t})\1_{\{S_{t}\leq b\}}\right)}{V_{q}([0,a]\times[0,b])}\\
          &=\frac{\E\left(\int^{a}_{0}{\rm d}\overline{L}_{t}F(X_{s}, 0\leq s<t)f(t,S_{t})\1_{\{S_{t}\leq b\}}\right)}{V([0,a]\times[0,b])},
        \end{split}
      \end{equation*}
      for any $F:\mathbb{D}\to\re$ and $f:\re\to\re$ positive and measurable functionals.
  %This relation defines a measure $\E^{a,b}$ over $\mathcal{F}_{a}$ carried by the paths whose supremum does not exceed the level $b$.

  Let us verify that as claimed, the measure under squared brackets in \eqref{eq::18} converges towards that of the dual L\'evy process $\widehat{X}$ conditioned to stay positive. For $T>0,$ we define a measure on $\mathcal{F}_{T}$ by setting
      $$\E^{\downarrow,T,q}(H(X_{s}, s\leq T)\1_{\{T<\zeta\}})=\frac{q}{\kappa(q,0)}\left[\overline{n}\left(\int^{\zeta}_{0}{\rm d}se^{-qs}H(\epsilon_{u}, u\leq T)\1_{\{T<s\}}\right)\right],$$ with $H:\mathbb{D}\to\re$ any positive measurable functional. Equivalently, $\E^{\downarrow,T,q}$ is the restriction to $\mathcal{F}_{T}\cap\{T<\zeta\},$ of the measure in the rightmost factor in \eqref{eq::18}. Also, by taking $F\equiv 1\equiv f$ in \eqref{eq::18}, we see that the latter is equal to the law of $(X_{g_{\e_{q}}}-X_{g_{\e_{q}+s}}, 0\leq s\leq T-g_{\e_{q}})$ conditionally on $\{g_{\e_{q}}\leq a, S_{\e_{q}}\leq b\}.$
  Recall that under $\overline{n}$ the canonical process of excursions has the strong Markov property with the same semigroup as the dual process $\widehat{X}$ killed at its first passage time below $0$; see for example Chapter VI.48 of \cite{RWII}. The Markov property at time $T$ implies hence that
      \begin{equation*}
        \begin{split}
          \frac{q}{\kappa(q,0)}\overline{n}\left(\int^{\zeta}_{0}{\rm d}se^{-qs}H(\epsilon_{u}, u\leq T)\1_{\{T<s\}}\right)&=\frac{q}{\kappa(q,0)}\overline{n}\left(\int^{\zeta}_{T}{\rm d}se^{-qs}H(\epsilon_{s}, s\leq T)\1_{\{T<s\}}\right)\\
          &=\overline{n}\left(H(\epsilon_{s}, s\leq T)\frac{\widehat{\P}_{\epsilon_{T}}(\tau^{-}_{0}>\e_q)}{\kappa(q,0)}\1_{\{T<\zeta\}}\right).
        \end{split}
      \end{equation*}
      The function $$h_{q}(x)=\frac{\widehat{\P}_{x}(\tau^{-}_{0}>\e_q)}{\kappa(q,0)},\qquad  x\geq 0,$$ is known to be an excessive function for the dual process killed at its first passage time below $0,$ and to be equal to
      $$h_{q}(x)=V_{q}((0,\infty)\times[0,x])=\E\left(\int^{\infty}_{0}{\rm d}\overline{L}_{s}e^{-qs}1_{\{X_{s}\leq x\}}\right),$$ see e.g. \cite{CD}. For each $x\geq 0,$ it converges monotonically increasing to $V(x)$ which is known to be invariant for the dual process $X$ killed at its first passage time below $0,$ unless the process drifts towards $-\infty,$ in which case the function is excessive. It follows that for every $H$ as above we have the convergence
      $$\E^{\downarrow,T}(H(X_{s}, s\leq T)\1_{\{T<\zeta\}}):=\lim_{q\to 0}\E^{\downarrow,T,q}(H(X_{s}, s\leq T)\1_{\{T<\zeta\}})=\widehat{\P}^{\uparrow}(H(X_{s}, s\leq T)).$$ The above relation defines a family of measures $(\E^{\downarrow,T}, T\geq 0)$ on $\mathcal{F},$ which is consistent. By the Kolmogorov consistency theorem the unique measure on $\mathcal{F}$ whose restriction to $\mathcal{F}_{T}$ is $\E^{\downarrow,T}$, for $T\geq 0$, coincides with $\widehat{\P}^{\uparrow}.$
       \end{proof}

  Our next aim is to describe in further detail the pre-supremum path of $X,$ $(X_{t}, t\leq g_{\infty}),$ under a probability measure $\P^{a,b}$ on $\mathcal{F},$ whose expectations are defined by
  \begin{equation*}%\label{eq::171}
        \begin{split}
  \E^{a,b}\left(F(X_{s}, 0\leq s<g_{\infty})f(g_{\infty}, S_{g_{\infty}})\right)&:=\lim_{q\to 0}\frac{\E\left(\int^{a}_{0}{\rm d}\overline{L}_{t}e^{-q t}F(X_{s}, 0\leq s<t)f(t,S_{t})\1_{\{S_{t}\leq b\}}\right)}{V_{q}([0,a]\times[0,b])}\\
          &=\frac{\E\left(\int^{a}_{0}{\rm d}\overline{L}_{t}F(X_{s}, 0\leq s<t)f(t,S_{t})\1_{\{S_{t}\leq b\}}\right)}{V([0,a]\times[0,b])},
        \end{split}
      \end{equation*}
  with $F:\mathbb{D}\to\re, f:\re\to\re,$ positive measurable functionals, as above. The probability measure $\P^{a,b}$ is carried by the paths with lifetime bounded by $a$ and whose supremum does not exceed the level $b$. Furthermore, as a particular consequence of the definition of $\P^{a,b}$ and the identity in \eqref{chaumont:1} we deduce the following Corollary.
  \begin{corollary}\label{corollary:4.10} Under the measure $\P^{a,b}$ we have
  $$\P^{a,b}\left(g_{\infty}\in {\rm d}s, S_{g_{\infty}}\in {\rm d}y\right)=\frac{1}{V([0,a]\times[0,b])}1_{\{0<s\leq a, 0\leq y\leq b\}}{\rm d}s\underline{n}(\epsilon_{s}\in {\rm d}y, s<\zeta).$$
  \end{corollary}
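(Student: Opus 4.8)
The plan is to read off the joint law of $(g_\infty, S_{g_\infty})$ directly from the defining relation for $\P^{a,b}$ by specialising the pre-supremum functional. First I would take $F\equiv 1$ in that definition, so that it collapses to
\[
\E^{a,b}\bigl(f(g_\infty, S_{g_\infty})\bigr)=\frac{1}{V([0,a]\times[0,b])}\,\E\left(\int_0^a {\rm d}\overline L_t\, f(t,S_t)\1_{\{S_t\leq b\}}\right),
\]
valid for every positive measurable $f:\re^2\to\re$. Since this holds for all such $f$, the integrating measure on the right-hand side is exactly the joint law of $(g_\infty,S_{g_\infty})$ under $\P^{a,b}$, and the task reduces to rewriting that measure in the stated form.

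The key observation is that $\overline L$ increases only at those times $t$ at which $X_t=S_t$; hence ${\rm d}\overline L_t$-almost everywhere one may replace $S_t$ by $X_t$ inside the integral. Setting $g(t,x):=f(t,x)\1_{\{0<t\leq a,\,0\leq x\leq b\}}$, this gives
\[
\E\left(\int_0^a {\rm d}\overline L_t\, f(t,S_t)\1_{\{S_t\leq b\}}\right)=\E\left(\int_0^\infty {\rm d}\overline L_t\, g(t,X_t)\right),
\]
and applying the renewal identity \eqref{eq:22} to $g$ the right-hand side equals $\iint_{(0,a]\times[0,b]}V({\rm d}t,{\rm d}x)\,f(t,x)$.

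Finally I would substitute the Chaumont identity \eqref{chaumont:1}, namely $V({\rm d}t,{\rm d}x)=\underline n(\epsilon_t\in{\rm d}x,\,t<\zeta)\,{\rm d}t$, to arrive at
\[
\E^{a,b}\bigl(f(g_\infty, S_{g_\infty})\bigr)=\frac{1}{V([0,a]\times[0,b])}\int_0^a {\rm d}t\int_0^b f(t,y)\,\underline n(\epsilon_t\in{\rm d}y,\,t<\zeta),
\]
and because this identifies the integrating measure for arbitrary $f$, the claimed expression for $\P^{a,b}(g_\infty\in{\rm d}s,\,S_{g_\infty}\in{\rm d}y)$ follows at once. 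The only point needing care — and the nearest thing to an obstacle — is the justification that $S_t$ may be replaced by $X_t$ on the support of ${\rm d}\overline L_t$; but this is precisely the change of variables already used to pass from the definition of $V$ to \eqref{eq:22}, so beyond this bookkeeping the argument is a direct substitution.
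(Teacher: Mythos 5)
Your proof is correct and takes essentially the same route as the paper, which offers no separate argument but simply asserts the corollary to be ``a particular consequence of the definition of $\P^{a,b}$ and the identity in \eqref{chaumont:1}''; your steps (taking $F\equiv 1$, replacing $S_t$ by $X_t$ on the support of ${\rm d}\overline{L}_t$, then applying \eqref{eq:22} and \eqref{chaumont:1}) are exactly the intended chain of substitutions. The one point you flag as delicate, the replacement of $S_t$ by $X_t$ under ${\rm d}\overline{L}_t$, is precisely the change of variables the paper itself uses to derive \eqref{eq:22}, so it is sound.
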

  In the spirit of Theorem~\ref{pathdecomp} we now describe the law $\E^{a,b}$ conditionally on the event $\{g_{\infty}=t\},$ for $0<t<a.$
  \begin{theorem}\label{condaginf}
  Fix $b>0,$ $a>0,$ and $0<s<a.$ The function $h_{s}$ defined by $$h_{s}(t,x,y)=\underline{n}\left(x<\epsilon_{s-t}<b-y, s-t<\zeta\right),\qquad t<s, y<b, x\geq 0,$$ is such that
  $$\E(h_{s}(t, S_{t}-X_{t}, X_{t})\1_{\{S_{t}<b\}})=h_{s}(0,0,0)=\underline{n}(0<\epsilon_{s}<b, s<\zeta),\qquad \text{for } s>t.$$ The measure $Q^{s,b}$ defined on $\mathcal{F}_{s-}$ thorough the relation
  \begin{equation}\label{eq:19.1}
  Q^{s,b}(F(X_{u}, u\leq T)):=\E\left(F(X_{u}, u\leq T)\frac{h_{s}(T, S_{T}-X_{T}, X_{T})}{h_{s}(0,0,0)}\1_{\{S_{T}<b\}}\right),\qquad T<s,
  \end{equation} for any $F:\mathbb{D}\to\re^{+}$ measurable functional,
   is a regular conditional version of $\E^{a,b}$ given $\{g_{\infty}=s\}.$
  \end{theorem}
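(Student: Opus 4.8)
The plan is to reduce both assertions of the theorem to a single conditional identity for the local time $\overline{L}$ of the reflected process, from which the harmonicity of $h_s$ and the disintegration of $\E^{a,b}$ can then be read off. Writing $R_t:=S_t-X_t$, the key lemma I would establish is that for $0\le T<a$ and any measurable $\varphi\ge 0$,
\[
\E\!\left(\int_T^a \mathrm{d}\overline{L}_t\,\varphi(t)\1_{\{S_t\le b\}}\,\Big|\,\mathcal{F}_T\right)=\1_{\{S_T<b\}}\int_T^a \varphi(s)\,\underline{n}\!\left(R_T<\epsilon_{s-T}<b-X_T,\ s-T<\zeta\right)\mathrm{d}s.
\]
To prove this I would apply the strong Markov property at $T$ together with spatial homogeneity, writing $X'_u=X_{T+u}-X_T$ for a fresh copy of $X$ independent of $\mathcal{F}_T$. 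The reflected process after $T$ is $R_{T+u}=\max(R_T,\overline{X'}_u)-X'_u$, which stays strictly positive while $\overline{X'}_u<R_T$ and coincides with the fresh reflected process $\overline{X'}_u-X'_u$ once $\overline{X'}_u\ge R_T$; hence $\mathrm{d}\overline{L}_{T+u}=\1_{\{\overline{X'}_u\ge R_T\}}\,\mathrm{d}\overline{L}'_u$. On the support of $\mathrm{d}\overline{L}'$ one has $X'_u=\overline{X'}_u$ and $S_{T+u}=X_T+\overline{X'}_u$, so applying \eqref{eq:22} to $X'$ turns the conditional expectation into $\iint_{(0,a-T)\times[R_T,b-X_T)}\varphi(T+\tau)\,V(\mathrm{d}\tau,\mathrm{d}x)$, which by \eqref{chaumont:1} is exactly the right-hand side above (the left endpoint $R_T$ and the boundary $S_t=b$ contribute negligibly).

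With the lemma in hand I would unfold the definition of $\E^{a,b}$. For a functional $F=F(X_u,u\le T)$ with $T<s$ and a test function $\varphi$, the definition gives
\[
\E^{a,b}\!\left[F\,\varphi(g_\infty)\1_{\{T<g_\infty\}}\right]=\frac{1}{V([0,a]\times[0,b])}\,\E\!\left(F\int_T^a\mathrm{d}\overline{L}_t\,\varphi(t)\1_{\{S_t\le b\}}\right),
\]
and inserting the lemma, together with the identification $\underline{n}(R_T<\epsilon_{s-T}<b-X_T,\ s-T<\zeta)=h_s(T,R_T,X_T)$, yields
\[
\E^{a,b}\!\left[F\,\varphi(g_\infty)\1_{\{T<g_\infty\}}\right]=\frac{1}{V([0,a]\times[0,b])}\int_T^a\varphi(s)\,\E\!\left(F\,h_s(T,S_T-X_T,X_T)\1_{\{S_T<b\}}\right)\mathrm{d}s.
\]
Recognising $\P^{a,b}(g_\infty\in\mathrm{d}s)=h_s(0,0,0)\,V([0,a]\times[0,b])^{-1}\mathrm{d}s$, obtained by integrating the joint law of Corollary~\ref{corollary:4.10} over $y\in[0,b]$ and using $h_s(0,0,0)=\underline{n}(0<\epsilon_s<b,\ s<\zeta)$, the last display is precisely $\int_T^a\varphi(s)\,Q^{s,b}(F)\,\P^{a,b}(g_\infty\in\mathrm{d}s)$, which is the disintegration identifying $Q^{s,b}$ as a regular conditional version of $\E^{a,b}$ given $\{g_\infty=s\}$.

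The harmonic identity $\E(h_s(t,S_t-X_t,X_t)\1_{\{S_t<b\}})=h_s(0,0,0)$ then drops out by setting $F\equiv 1$ in the displayed disintegration and comparing with the marginal law of $g_\infty$ from Corollary~\ref{corollary:4.10}: equating the two expressions for $\E^{a,b}[\varphi(g_\infty)\1_{\{T<g_\infty\}}]$ for all $\varphi$ forces $\E(h_s(T,R_T,X_T)\1_{\{S_T<b\}})=h_s(0,0,0)$ for almost every $s>T$, and continuity in $s$ upgrades this to all $s>t$. Equivalently, this is the statement that $Q^{s,b}$ has total mass one; applying the same lemma with conditioning at an earlier time $T'<T$ shows that $t\mapsto h_s(t,R_t,X_t)\1_{\{S_t<b\}}$ is a martingale on $[0,s)$, so that \eqref{eq:19.1} consistently defines a probability measure on $\mathcal{F}_{s-}$.

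I expect the main obstacle to be the careful justification of the local-time decomposition $\mathrm{d}\overline{L}_{T+u}=\1_{\{\overline{X'}_u\ge R_T\}}\,\mathrm{d}\overline{L}'_u$. One must argue that no local time accrues during the excursion straddling $T$, which follows from the strict positivity of $R_{T+u}$ until $\overline{X'}$ first reaches $R_T$, and that the local-time normalisations agree after this first passage, so that the two reflected processes share the same clock on the relevant time set; one must also check that the boundary contributions at $\overline{X'}_u=R_T$ and at the levels $S_t=b$ and $S_T=b$ are negligible. These points rely on the standing assumption that $0$ is regular for both half-lines and on the absence of a drift in the inverse local time at the supremum, as already used in the preceding proof.
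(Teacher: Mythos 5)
Your proposal is correct and takes essentially the same route as the paper's proof: both unfold the definition of $\E^{a,b}$, apply the Markov property at $T$, evaluate the conditional local-time integral via the fluctuation identity $S_{T+u}=(S_T-X_T)\vee\sup_{v\le u}(X_{T+v}-X_T)+X_T$ combined with \eqref{eq:22} and \eqref{chaumont:1} to produce $h_s(T,S_T-X_T,X_T)$, derive the harmonicity identity for a.e.\ $s$ by varying the test function against Corollary~\ref{corollary:4.10} and upgrade it to all $s$ by right-continuity plus domination, and conclude with a consistency/extension argument. The differences are purely presentational: your explicit decomposition $\mathrm{d}\overline{L}_{T+u}=\1_{\{\overline{X'}_u\ge R_T\}}\mathrm{d}\overline{L}'_u$ and the martingale formulation of consistency are just more detailed versions of the paper's terser appeal to stationary independent increments, the growth of local time only at new suprema, and the Kolmogorov consistency theorem.
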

  \begin{proof}
  By the identity \eqref{condtostaybelow} and the Markov property for $X$ under $\P$
  we have
  \begin{equation*}
  \begin{split}
  &\E^{a,b}\left(F(X_{u}, u\leq T)1_{\{T<g_{\infty}\}}f(g_{\infty})\right)\\
  &=C\E\left(\int^{a}_{0}{\rm d}\overline{L}_{t}F(X_{u}, u\leq T)1_{\{T<t, S_{t}<b\}}f(t)\right)\\
  &=C\E\left(F(X_{u}, u\leq T)\1_{\{S_{T}<b\}}\E\left(\int^{a}_{0}{\rm d}\overline{L}_{t}1_{\{T<t, S_{t}<b\}}f(t)|\mathcal{F}_{T}\right)\right),
  \end{split}
  \end{equation*}
  where $C=\frac{1}{V([0,a]\times[0,b])}.$ To determine the conditional expectation we use the following common identity in fluctuation theory $$S_{T+u}-X_{T}=(S_{T}-X_{T})\vee\sup\{X_{T+v}-X_{T}, v\leq u\};$$ this together with the independence and stationarity of the increments and that the local time grows only at the instants where $X$ reaches a new supremum, allows to simplify this expression to get
  \begin{equation*}
  \begin{split}
  &\E\left(\int^{a}_{T}{\rm d}\overline{L}_{t}1_{\{S_{t}<b\}}f(t) | \mathcal{F}_{T}\right)\\
  &=\E\left(\int^{a-T}_{0}{\rm d}\overline{L}_{v}1_{\{x<S_{v}<b-y\}}f(v+T) \right)|_{\{x=S_{T}-X_{T}, y=X_{T}\}}.
  \end{split}
  \end{equation*}
  Using the equalities \eqref{eq:22} and \eqref{chaumont:1}, together with Fubini's Theorem the right most term above can be written as
  \begin{equation*}
  \begin{split}
  &\int^{a-T}_{0}{\rm d}v f(v+T)\underline{n}\left(x<\epsilon_{v}<b-y, v<\zeta\right)|_{\{x=S_{T}-X_{T}, y=X_{T}\}}\\
  &=\int^{a}_{T}{\rm d}s f(s)h_{s}(T, S_{T}-X_{T}, X_{T}).
  \end{split}
  \end{equation*}
  Putting the pieces together we infer
  \begin{equation*}\label{eq:20v1}
  \begin{split}
  &\E^{a,b}\left(F(X_{u}, u\leq T)1_{\{T<g_{\infty}\}}f(g_{\infty})\right)\\
  &=C\int^{a}_{0}{\rm d}s f(s)\underline{n}(0<\epsilon_{s}<b, s<\zeta)\E\left(F(X_{u}, u\leq T)\1_{\{S_{T}<b, T<s\}}\frac{h_{s}(T, S_{T}-X_{T}, X_{T})}{\underline{n}(0<\epsilon_{s}<b, s<\zeta)}\right).
  \end{split}
  \end{equation*}
Applying this formula for $T>0,$ $F\equiv1,$ and using Corollary~\ref{corollary:4.10} we deduce the identity
  $$\int^{a}_{T}{\rm d}s f(s)\underline{n}(0<\epsilon_{s}<b, s<\zeta)=\int^{a}_{T}{\rm d}s f(s)\E\left(\1_{\{S_{T}<b\}}h_{s}(T, S_{T}-X_{T}, X_{T})\right).$$ Since the above holds for any $f$ positive and measurable we deduce that for $T>0$ and a.e. $s>T$
  $$\underline{n}(0<\epsilon_{s}<b, s<\zeta)=\E\left(\1_{\{S_{T}<b\}}h_{s}(T, S_{T}-X_{T}, X_{T})\right).$$
  By the right continuity of $s\mapsto h_{s}(T,x,y)$ and the bound $$h_{s}(T,x,y)\leq \underline{n}(s-T<\zeta)\leq \underline{n}(\delta<\zeta)<\infty,\qquad \text{with $\delta>0$ s.t.} \  s-T>\delta,$$ it is seen using a dominated convergence argument that the latter identity holds for any $s>T.$ This implies the first claim in the Theorem. The second claim now follows from the identity \eqref{eq:20v1} and the Kolmogorov consistency theorem to ensure that there is a unique measure, $Q^{s,b},$ that satisfies the relation \eqref{eq:19.1}.
  \end{proof}
  It is possible to push forward the description of the measure $\E^{a,b}$ by conditioning on the value of the pair $(g_{\infty}, S_{g_{\infty}}).$ This needs for instance the further assumption that $X$ is such that its semigroup is absolutely continuous and with bounded densities, viz
  $$\P_{x}(X_{t}\in {\rm d}y)=p_{t}(y-x){\rm d}y,\qquad y\in \mathbb{R}, t\geq 0,$$ with $p_{t}(\cdot)$ bounded. In this setting it has been proved in \cite{chaumontmalecki} that the measures $\underline{n}(\epsilon_{s}\in {\rm d}y, s<\zeta)$ are absolutely continuous with respect to Lebesgue's measure
  $$\underline{n}(\epsilon_{s}\in {\rm d}y,s<\zeta)=q^{*}_{s}(y){\rm d}y,\qquad y>0, s>0,$$ and for $s>0,$ $q^{*}_{s}(\cdot)$ is a strictly positive and continuous function on $(0,\infty).$ Then Corollary~\ref{corollary:4.10} becomes
  $$\P^{a,b}\left(g_{\infty}\in {\rm d}s, S_{g_{\infty}}\in {\rm d}y\right)=\frac{1}{V([0,a]\times[0,b])}1_{\{0<s\leq a, 0\leq y\leq b\}}q^{*}_{s}(y){\rm d}s{\rm d}y.$$ Following the arguments in the proof of Theorem~\ref{condaginf} it is possible obtain a version of the formula \eqref{eq:20v1} but for $f(g_{\infty}, S_{g_{\infty}}),$ which will give place to an expression of the regular conditional version of $\E^{a,b}$ given $\{g_{\infty}=s, S_{g_{\infty}}=y\},$ with $s\leq a$ and $y\leq b.$ %We omit the details for sake of brevity.

  \section*{Acknowledgements}

We would like to thank Ron Doney who suggested what conditioning a subordinator to stay in a strip would entail and the two anonymous referees for their valuable feedback.
AEK and B\c{S} acknowledge support from EPSRC grant number EP/L002442/1. AEK and VR acknowledge support from EPSRC grant number EP/M001784/1. VR acknowledges support from CONACyT grant number 234644. This work was undertaken whilst VR was on sabbatical at the University of Bath, he gratefully acknowledges the kind hospitality of the Department and University.

%%%%%
% 1. Get rid of the killing function? Done
% 2. Bati does not like shift operator. Sorry mate, I got ride of it
% 3. Make consistent with previous section - theorem 4.9 and Lemma 4.5 should really have the same notation ? No, Theorem 4.9 has to be compared to theorem 4.2
% 4. Use \epsilon for excursion under the excursion measure. OK.
% 5. Introduce the problem more efficiently - is it said what he is doing clearly, do all the things introduced need to be introduced? We should check this together
% 6. The zero function - would need to assume the maximum is not heavy. What about the case of Poisson processes -need to be excluded as no unique maximum? Poisson process is excluded because of the assumption of regularity upwards, I added the assumption of regularity downwards, just to avoid technicalities $0.$
% What is g? last visit to zero for the process reflected.
%% In what sense is the conditioned process a conditioned process? Isn�t it clear from theorem 4.9?

\printbibliography

\end{document}